\newtheorem{theorem}{Theorem}
\newtheorem{definition}{Definition}
\newtheorem{lemma}{Lemma}
\newtheorem{proposition}{Proposition}
\newtheorem{corollary}{Corollary}
\newtheorem{remark}{Remark}
\numberwithin{equation}{section}
\numberwithin{theorem}{section}
\numberwithin{lemma}{section}
\numberwithin{corollary}{section}
\numberwithin{remark}{section} 
\numberwithin{proposition}{section}
\numberwithin{definition}{section}
\newcommand{\R}{\mathbb{R}}
\newcommand{\loc}{\operatorname{loc}}
\def\XXint#1#2#3{{\setbox0=\hbox{$#1{#2#3}{\int}$ }
\vcenter{\hbox{$#2#3$ }}\kern-.6\wd0}}
\title[A nonlocal dead-core problem]{Improved regularity for a nonlocal dead-core problem}
\author[D. Prazeres]{Disson dos Prazeres}
\address{Department of Mathematics, UFS, 49100-000, S\~ao Crist\'ov\~ao-SE, Brazil}{}
\email{disson@mat.ufs.br}
\author[R. Teymurazyan]{Rafayel Teymurazyan}
\address{Applied Mathematics and Computational Sciences (AMCS), Computer, Electrical and Mathematical Sciences and Engineering Division (CEMSE), King Abdullah University of Science and Technology (KAUST), Thuwal, 23955-6900, Kingdom of Saudi Arabia}{} 
\email{rafayel.teymurazyan@kaust.edu.sa} 
\author[J.M.~Urbano]{Jos\'{e} Miguel Urbano}
\address{Applied Mathematics and Computational Sciences (AMCS), Computer, Electrical and Mathematical Sciences and Engineering Division (CEMSE), King Abdullah University of Science and Technology (KAUST), Thuwal, 23955-6900, Kingdom of Saudi Arabia and CMUC, Department of Mathematics, University of Coimbra, 3000-143 Coimbra, Portugal}{} 
\email{miguel.urbano@kaust.edu.sa}
\begin{document}

\subjclass[2020]{Primary 35R09,  Secondary 35J60, 35B65, 35B53}

% 35R09 Integro-partial differential equations

% 35J60 Nonlinear elliptic equations

% 35B65 Smoothness and regularity of solutions to PDEs

% 35B53 Liouville theorems and Phragmén-Lindelöf theorems in context of PDEs

\keywords{Dead-core, two-phase problems, fractional Laplacian, branching points, Liouville-type theorem.}

\begin{abstract} 
We obtain improved regularity results for solutions to a nonlocal dead-core problem at branching points. Our approach, which does not rely on the maximum principle, introduces a new strategy for analyzing two-phase problems within the local framework, an area that remains largely unexplored.
\end{abstract}  

\date{\today}

\maketitle

\section{Introduction}

In this work, we obtain improved regularity properties for solutions of the \textit{two-phase} nonlocal problem
\begin{equation}\label{equation}
   -(-\Delta)^su=u_+^\gamma-u_-^\gamma \quad \textrm{in} \ B_1,
\end{equation}
where $(-\Delta)^s$ is the fractional Laplacian and $\gamma\in(0,1/3)$. Local versions of the \textit{one-phase} problem have attracted increasing attention in recent years (see, for example, \cite{ALT16, AT24, DT20, T16}) due to their wide range of applications, for instance, in optimizing resources in catalysis processes. In such reaction-diffusion models, the presence of a catalyst accelerates the rate of chemical reactions, leading to the formation of regions where the reactant concentration drops to zero and no reaction occurs. These regions are commonly known as \textit{dead-cores} in the literature, and placing catalysts there would be ineffective and result in resource wastage. The mathematical study of these phenomena has a long-standing history, dating back to the seminal work on the classical Alt-Phillips problem (cf. \cite{AP86, ATU24, P1, P2, WY22, Y13}).

The dead-core problem \eqref{equation} is nonlocal in nature, and the techniques used to treat the corresponding local models cannot be applied in our setting, requiring an alternative approach. For example, we cannot rely on the maximum principle to derive sharp regularity estimates as done in the one-phase local model of \cite{T16} corresponding to $s=1$ and $u\ge0$. The reason for this is two-fold. First, for the maximum principle to hold in the nonlocal setting, solutions would need to have a sign in the whole complement of the unit ball (see, for example, \cite[Theorem 6.1]{T24}). Second, while in one-phase models, points on the free boundary $\partial\{u>0\}$ correspond to local minima of the function, this property does not extend to the two-phase scenario.

Expanding on ideas from \cite{S15}, our strategy relies instead on the study of the growth of solutions according to the natural scaling of the equation, combined with an auxiliary Liouville-type result. This leads to improved sharp regularity results at branching points, where the function vanishes with some of its derivatives, typically up to order two. The approach comes with the caveat of restricting $\gamma\in(0,1/3)$, whereas in the \textit{one-phase} local case, we can take $\gamma\in(0,1)$. This is induced by the nonlocal nature of the problem, as the new approach requires a certain integrability condition (see the proof of \Cref{main result 1} below) only valid in this range. However, unlike the local result in \cite{T16}, this new scheme offers two key advantages. First, it extends to the two-phase case, regardless of whether the left-hand side of \eqref{equation} has a sign. In fact, the equation 
$$
(-\Delta)^su=u_+^\gamma
$$
has no direct local analogue as, for $s=1$, no dead-core phenomenon occurs, \textit{i.e.}, no non-negative solutions vanish in an interior region without being identically zero. The second significant advantage builds on the first, introducing a new strategy for analyzing the two-phase problem in the \textit{local} setting. Since our estimates remain uniform, we can establish sharp regularity results for solutions of the local two-phase problem by passing to the limit as $s \nearrow 1$ in \eqref{equation} (see \Cref{local two-phase problem}). The one-phase local problem, studied in \cite{T16} (see also \cite{AT24, DT20}), relies heavily on the maximum principle, which is why the corresponding two-phase problem remained unresolved. Our approach circumvents this difficulty, as it does not rely on the maximum principle or the sign of the right-hand side. 

To remain at least in the $C^{1,\sigma}-$regularity regime, we restrict our analysis to the range $s\in\left(1/2,1\right)$. Observe that as long as the right-hand side is bounded, we have, \textit{a priori}, that solutions of \eqref{equation} are locally of class $C^{1,\sigma}$, for any $0<\sigma<2s-1$ (see \Cref{CNtheorem} below). Since, in general, $u_+$ and $u_-$ are at most Lipschitz and the right-hand side in \eqref{equation} is then locally $C^\gamma$, the best regularity one can hope for solutions of \eqref{equation}, using Schauder theory, is $C^{2s+\gamma}_{\loc}$. However, our main result reveals that solutions are indeed $C^{\frac{2s}{1-\gamma}}$ at branching points. Note that
$$
1+\frac{2s-1+\gamma}{1-\gamma} = \boxed{\frac{2s}{1-\gamma}>2s+\gamma} = 1+\frac{2s-1+\gamma}{1},
$$
for any $\gamma \in (0,1)$ and $s\in \left(1/2,1\right)$, \textit{i.e.}, we obtain higher regularity than one could hope for relying on Schauder theory alone. More precisely, assuming $x_0$ is a branching point for $u$, \textit{i.e.}, $u(x_0)=|D^{\nu-1}u(x_0)|=|D^\nu u(x_0)|=0$, we show that around it one has precisely the growth given by
\begin{equation}\label{mainestimate}
|u(x)|\leq C|x-x_0|^{\frac{2s}{1-\gamma}}.
\end{equation}
Here $\nu$ is a constant defined by
\begin{equation*}
    \nu=\left\{\begin{array}{cc}
    1, &  s<1-\gamma,\\
    2, &  s>1-\frac{\gamma}{2},
    \end{array}
    \right.
\end{equation*}
The idea of the proof is to study the growth of the scaled function
$$
v_r(x):=\frac{u(rx)}{r^{\frac{2s}{1-\gamma}}},
$$
as $r\rightarrow 0$. Obtaining a Liouville-type theorem, we ensure that $v_r$ grows like a polynomial of degree $\nu$, which implies \eqref{mainestimate} by contradiction. 

In the nonlocal setting, a similar problem was studied in \cite{Y13}. However, in addition to the fact that the model considered therein does not generate dead-cores, the result heavily depends on the celebrated Caffarelli-Silvestre extension argument (\textit{cf.} \cite{CS07}), which limits its flexibility and prevents generalizations to a broader class of operators. On the contrary, our approach is flexible enough to be extended to fully nonlinear equations (\textit{cf.} \Cref{s6}).

\smallskip

The paper is organized as follows. In \Cref{s2}, we introduce the necessary notation and auxiliary results. The primary technical tool, a Liouville-type theorem, is presented in \Cref{s3}, while our main result on improved regularity at branching points is proved in \Cref{s4}. In \Cref{s5}, we explore some consequences of the main result, including its local implications and a Liouville-type property. In the final \Cref{s6}, we address the extension to the fully nonlinear case.

\medskip

\section{Notation and auxiliary results}\label{s2}

This section gathers some notation, the notion of solution, some remarks about existence, and a few auxiliary well-known results.

\subsection{Notation}
We denote with $B_r(x_0)$ the open ball of radius $r$ centered at $x_0$, and write $B_r:=B_r(0)$. For a multi-index $\beta=(\beta_1,\beta_2,\ldots,\beta_n)$, as usual, we put $|\beta|:=\beta_1+\beta_2+\ldots+\beta_n$. For $\alpha\in(0,1)$, the H\"older semi-norm is defined as 
$$
[u]_{C^{\alpha}}:=\sup_{x\neq y}\frac{|u(x)-u(y)|}{|x-y|^\alpha},
$$
$$
[u]_{C^{1+\alpha}}:=\max_{|\beta|=1} \left[ D^\beta u \right]_{C^{\alpha}},
$$
and
$$
[u]_{C^{2+\alpha}}:=\max_{|\beta|=2} \left[ D^\beta u \right]_{C^{\alpha}},
$$
where $D^\beta u:=\partial_{x_1}^{\beta_1}\ldots\partial_{x_n}^{\beta_n}u$.

The fractional Laplacian is the nonlocal operator defined by
\begin{equation*}	
(-\Delta)^su(x):=c_{n,s}\,\textrm{P.V.}\int_{\R^n}\frac{u(x)-u(y)}{|x-y|^{n+2s}}\,dy,
\end{equation*}
where $s \in (0,1)$ and $c_{n,s}$ is a normalization constant, depending only on $n$ and $s$. We use $u_+:=\max(u,0)$ and $u_-:=(-u)_+$. We also use the norm
$$
\|u\|_{L^1_s(\R^n)}:=\int_{\R^n}\frac{|u(y)|}{1+|y|^{n+2s}}dy.
$$

\subsection{Existence of solutions}

In the spirit of \cite{BCI08, BI08} (see also \cite{CLU14, CS09, CTU20}), and with the goal of extending our results to a broader class of fully nonlinear operators (see \Cref{s6}), solutions to \eqref{equation} are understood in the viscosity sense according to the following definition. 

\begin{definition}\label{d2.2}

A function $u:\R^n\rightarrow\R$ is called a viscosity subsolution (supersolution) of \eqref{equation}, and we write 
$$
-(-\Delta)^su\geq (\leq)\, u_+^\gamma-u_-^\gamma,
$$
if $u$ is upper (lower) semi-continuous in $\overline{B_1}$ and, whenever $x_0\in B_1$, $B_r(x_0)\subset B_1$, for some $r$, and $\varphi \in C^2(\overline{B_r(x_0)})$ satisfies
$$\varphi(x_0)=u(x_0) \quad {\rm and} \quad \varphi(y)> (<) \, u(y), \ \forall y\in B_r(x_0) \setminus \{x_0\},$$
then, if we let
$$
v:=
\left\{
\begin{array}{ll}
\varphi & \hbox{in } \ B_r(x_0)\\
\\
u & \hbox{in } \ \R^n\setminus B_r(x_0),
\end{array}
\right.
$$
we have $-(-\Delta)^sv(x_0)\geq (\leq) \, v_+^\gamma(x_0)-v_-^\gamma(x_0)$.  

A function is called a viscosity solution if it is both a viscosity subsolution and a viscosity supersolution.
\end{definition}

For the existence of viscosity solutions, we refer the reader to \cite[Theorem 1.2]{BCI08}. The rough idea is that, since the comparison principle holds for the fractional Laplacian (see \cite[Theorem 5.2]{CS09}, \cite[Theorem 2.5]{CTU20} or \cite[Corollary 6.1]{T24}), the classical Perron's method then leads to the existence of the solution to the Dirichlet problem (see \cite{CIL92}, for example). The comparison principle for $$Lu:=-(-\Delta)^su-u_+^\gamma+u_-^\gamma$$  follows from \cite[Theorem 3]{BI08}. Its proof relies on the nonlocal Jensen-Ishii lemma, established in \cite[Lemma 1]{BI08}, and makes use of the technique of jets with ideas from \cite{CIL92}. For the sake of completeness, we state it below.
\begin{theorem}\label{comparison}
  If $u_1,u_2\in C(\R^n)$, $Lu_1\le0\le Lu_2$ in $B_1$, and $u_1\ge u_2$ in $\R^n\setminus B_1$, then $u_1\ge u_2$ in $\R^n$.  
\end{theorem}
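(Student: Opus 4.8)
The plan is to argue by contradiction via the doubling-of-variables technique of Crandall--Ishii--Lions, in the nonlocal form of \cite{BI08, CIL92}. Suppose $M:=\sup_{\R^n}(u_2-u_1)>0$. Since $u_1\ge u_2$ outside $B_1$ and both functions are continuous, $M$ is attained at some point $\hat x$ in the \emph{open} ball $B_1$ (it cannot lie on $\partial B_1$, where $u_2-u_1\le 0<M$). For $\e>0$ I would then introduce the penalized functional
\[
\Phi_\e(x,y):=u_2(x)-u_1(y)-\frac{|x-y|^2}{2\e},
\]
maximized at $(x_\e,y_\e)$ over $\overline{B_1}\times\overline{B_1}$. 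The classical penalization lemma \cite[Lemma 3.1]{CIL92} gives, along a subsequence, $x_\e,y_\e\to\hat x$, $\e^{-1}|x_\e-y_\e|^2\to 0$ and $u_2(x_\e)-u_1(y_\e)\to M$; in particular, for $\e$ small, $x_\e,y_\e\in B_1$ and $u_2(x_\e)-u_1(y_\e)\ge \Phi_\e(x_\e,y_\e)\ge \Phi_\e(\hat x,\hat x)=M>0$.

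The second step turns the viscosity inequalities into pointwise singular-integral inequalities. The map $x\mapsto u_2(x)-\tfrac1{2\e}|x-y_\e|^2$ has a local maximum at $x_\e$, so the paraboloid $\phi(x):=u_2(x_\e)+\tfrac1{2\e}\big(|x-y_\e|^2-|x_\e-y_\e|^2\big)$ touches $u_2$ from above at $x_\e$ (strictly after adding a harmless $\eta|x-x_\e|^2$ term, later sent to $0$), and is thus admissible in \Cref{d2.2}; symmetrically $\psi(y):=u_1(y_\e)+\tfrac1{2\e}\big(|x_\e-y|^2-|x_\e-y_\e|^2\big)$ touches $u_1$ from below at $y_\e$. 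Writing the supersolution inequality for $u_2$ at $x_\e$ and the subsolution inequality for $u_1$ at $y_\e$ on small balls $B_\rho(x_\e)\subset B_1$ and $B_\rho(y_\e)\subset B_1$, and using that $\phi$ and $\psi$ are quadratics with the \emph{same} Hessian $\e^{-1}\mathrm{Id}$, the singular inner contributions coincide and cancel upon subtraction, leaving, with $f(t):=t_+^\gamma-t_-^\gamma$,
\[
f(u_2(x_\e))-f(u_1(y_\e))\le c_{n,s}\int_{\R^n\setminus B_\rho}\frac{\big[u_2(x_\e+z)-u_2(x_\e)\big]-\big[u_1(y_\e+z)-u_1(y_\e)\big]}{|z|^{n+2s}}\dd z.
\]
The crucial observation is that the integrand is nonpositive: translating both arguments by the same $z$ leaves the penalization term of $\Phi_\e$ unchanged, so its maximality at $(x_\e,y_\e)$ forces $u_2(x_\e+z)-u_1(y_\e+z)\le u_2(x_\e)-u_1(y_\e)$. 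Hence the right-hand side is $\le 0$ and $f(u_2(x_\e))\le f(u_1(y_\e))$. But $f$ is strictly increasing on $\R$, so $f(u_2(x_\e))-f(u_1(y_\e))\ge f(u_1(y_\e)+M)-f(u_1(y_\e))\to f(u_1(\hat x)+M)-f(u_1(\hat x))>0$ as $\e\to0$, a contradiction. Therefore $u_1\ge u_2$ in $\R^n$.

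I expect the main obstacle to be the nonlocal tail: the naive maximality argument justifies the sign of the integrand only for those $z$ with $x_\e+z$ and $y_\e+z$ still in the region over which $\Phi_\e$ is maximized, and one must control the far-field contribution separately, exploiting $u_1\ge u_2$ outside $B_1$ together with the $L^1_s$ integrability that makes $(-\Delta)^s$ well defined. This bookkeeping is exactly what the nonlocal Jensen--Ishii lemma \cite[Lemma 1]{BI08} packages cleanly (and, in the jet formulation, what yields the general statement), so at that point I would invoke it rather than reprove it; this also accounts for the reference to \cite[Theorem 3]{BI08} in the statement.
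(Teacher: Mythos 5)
The paper does not actually supply its own proof of this theorem: it states it and cites \cite[Theorem 3]{BI08}, explaining that the argument relies on the nonlocal Jensen--Ishii lemma \cite[Lemma 1]{BI08} and the technique of jets from \cite{CIL92}. Your sketch unpacks precisely that strategy (doubling of variables, touching paraboloids with identical Hessians so the singular inner contributions cancel, sign of the nonlocal difference via maximality, strict monotonicity of $t\mapsto t_+^\gamma-t_-^\gamma$ to close the contradiction), and you correctly identify the one genuinely delicate step --- controlling the contribution of those $z$ for which $(x_\e+z,y_\e+z)$ leaves the region of maximization --- and defer it to \cite{BI08}, which is exactly what the authors do. So you are on the same route, and the sketch is sound.

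Two small points worth fixing. First, with $Lu:=-(-\Delta)^su-u_+^\gamma+u_-^\gamma$, the hypothesis $Lu_1\le0$ makes $u_1$ the \emph{super}solution and $Lu_2\ge0$ makes $u_2$ the \emph{sub}solution; you wrote ``supersolution inequality for $u_2$ at $x_\e$ and the subsolution inequality for $u_1$ at $y_\e$'', which swaps the labels, although the touching directions you chose ($\phi$ from above at $x_\e$ for $u_2$, $\psi$ from below at $y_\e$ for $u_1$) are the correct ones for the correct labels, so this is purely a verbal slip. Second, the argument implicitly requires $u_1,u_2\in L^1_s(\R^n)$ (or at least boundedness) so that $(-\Delta)^s$ is well defined and the penalization/compactness step has a finite supremum; the theorem as stated only says $u_1,u_2\in C(\R^n)$, so this hypothesis is inherited from the ambient setting and is worth making explicit if you ever write this out in full.
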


We conclude this section with regularity results for solutions of the homogeneous and the non-homogeneous fractional Laplace equation.
For the proof of the following theorem, we refer the reader to \cite[Theorem 27]{CS11} (see also \cite[Theorem 13.1]{CS09}).

\begin{theorem}\label{regularity}
Let $s\ge s_0>0$ and $u\in C(\overline{B}_1)$ be such that $\|u\|_{L^1_s(\R^n)}<\infty$. If $(-\Delta)^su=0$ in $B_1$, then there exists $\sigma>0$, depending only on $n$ and $s_0$, such that $u\in C^{1+\sigma}(B_{1/2})$. Moreover,
$$
\|u\|_{C^{1+\sigma}(B_{1/2})}\le C\left(\|u\|_{L^\infty(B_1)}+\|u\|_{L^1_s(\R^n)}\right),
$$
where $C>0$ is a constant, depending only on $n$ and $s_0$.
\end{theorem}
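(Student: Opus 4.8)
The plan is to prove Theorem~\ref{regularity} through the explicit Poisson kernel representation for $-(-\Delta)^s$ on a ball; this actually yields the stronger conclusion that $u\in C^\infty(B_{1/2})$, together with scale-invariant bounds on every derivative, whence the stated $C^{1+\sigma}$ estimate follows at once (for instance with $\sigma=1/2$).

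First I would record the representation. For $r>0$, $x\in B_r$ and $y\in\R^n\setminus\overline{B_r}$, let
$$
P_r(x,y):=c_{n,s}\left(\frac{r^2-|x|^2}{|y|^2-r^2}\right)^{s}\frac{1}{|x-y|^n}
$$
denote the $s$-Poisson kernel of $B_r$. The function $w$, defined by
$$
w(x):=\int_{\R^n\setminus B_{3/4}}P_{3/4}(x,y)\,u(y)\dd y,\qquad x\in B_{3/4},
$$
and by $w:=u$ on $\R^n\setminus B_{3/4}$, is $s$-harmonic in $B_{3/4}$ and continuous up to $\partial B_{3/4}$ with trace $u|_{\partial B_{3/4}}$; here one uses $u\in C(\overline{B_1})$ and $\|u\|_{L^1_s(\R^n)}<\infty$ to guarantee convergence of the integral and boundary continuity of the Poisson integral. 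Since $u$ is itself $s$-harmonic (in the viscosity sense) in $B_{3/4}\subset B_1$, continuous, and coincides with $w$ outside $B_{3/4}$, the comparison principle for $-(-\Delta)^s$ (\cite[Theorem 5.2]{CS09}, applied in both directions) forces $u\equiv w$ in $\R^n$.

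The next step is to differentiate under the integral sign. For $x\in\overline{B_{1/2}}$ and $y\in\R^n\setminus B_{3/4}$ one has $|x-y|\ge 1/4$ and $(3/4)^2-|x|^2\ge 5/16$, so $x\mapsto P_{3/4}(x,y)$ is smooth and, for every multi-index $\beta$,
$$
\left|D_x^\beta P_{3/4}(x,y)\right|\le C_{|\beta|}\,\bigl(|y|^2-(3/4)^2\bigr)^{-s}\,|x-y|^{-n}.
$$
Splitting the resulting bound for $D^\beta u$ over $\{3/4<|y|<1\}$ and $\{|y|\ge 1\}$: on the former, $|u|\le\|u\|_{L^\infty(B_1)}$, $|x-y|^{-n}\le 4^n$, and $\int_{3/4<|y|<1}(|y|^2-(3/4)^2)^{-s}\dd y<\infty$ because $s<1$; on the latter, $(|y|^2-(3/4)^2)^{-s}|x-y|^{-n}\le C|y|^{-n-2s}\le C(1+|y|^{n+2s})^{-1}$, so the contribution is bounded by $\|u\|_{L^1_s(\R^n)}$. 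This yields
$$
\|D^\beta u\|_{L^\infty(B_{1/2})}\le C_{|\beta|}\left(\|u\|_{L^\infty(B_1)}+\|u\|_{L^1_s(\R^n)}\right)
$$
for every $\beta$, hence $u\in C^\infty(B_{1/2})$ with the claimed estimate.

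The genuinely delicate point is the identification $u\equiv w$: it rests on solvability and uniqueness for the exterior Dirichlet problem of $-(-\Delta)^s$ and on the boundary continuity of the $s$-Poisson integral of a bounded, $L^1_s$ datum. An alternative route that avoids the representation formula --- and which is in any case needed once the right-hand side is merely bounded, as in \Cref{CNtheorem} --- is the compactness and approximation scheme of Caffarelli-Silvestre: one first derives an interior $C^\alpha$ estimate, then upgrades it by showing that at each dyadic scale an $s$-harmonic function is, after rescaling, within a fixed fraction of an affine function, and iterates; this is the path followed in \cite[Theorem 27]{CS11}.
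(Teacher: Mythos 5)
Your argument is correct in its essentials and takes a genuinely different route from the one the paper relies on. The paper does not actually prove Theorem~\ref{regularity}; it cites it from \cite[Theorem 27]{CS11}, whose proof goes by compactness and approximation (an iterated improvement at dyadic scales of the closeness of the solution to a linear polynomial), a scheme designed to survive rough translation-invariant kernels and merely bounded right-hand sides, and which therefore only delivers $C^{1+\sigma}$. Your proof instead exploits a structure that is special to $(-\Delta)^s$, namely the closed-form Poisson kernel on a ball, and in return yields the stronger conclusion $u\in C^\infty(B_{1/2})$ with a bound on every derivative in terms of $\|u\|_{L^\infty(B_1)}+\|u\|_{L^1_s(\R^n)}$; the stated $C^{1+\sigma}$ estimate then follows trivially, say with $\sigma=1/2$. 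Two points are worth making explicit. First, the theorem requires constants uniform in $s\ge s_0$, so one must track the normalization $c_{n,s}$: since $c_{n,s}\sim(1-s)$ as $s\nearrow 1^-$, it exactly compensates the $\tfrac{1}{1-s}$ blow-up of $\int_{3/4<|y|<1}\bigl(|y|^2-(3/4)^2\bigr)^{-s}\,\mathrm{d}y$, so the final bound is indeed uniform for $s\in[s_0,1)$. Second, the identification $u\equiv w$ via \cite[Theorem 5.2]{CS09} needs a little care: that comparison principle is formulated for globally bounded viscosity sub/supersolutions, whereas here $u$ is only assumed to lie in $C(\overline{B_1})\cap L^1_s(\R^n)$. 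This is a standard technicality, addressed either by truncating $u$ outside $B_1$ and passing to the limit, or by invoking directly the Poisson representation theorem for $s$-harmonic functions in the weighted class $L^1_s$, but it should be flagged rather than absorbed silently into the citation.
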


The proof of the following theorem can be found in \cite[Theorem 61]{CS11} (see also \cite[Theorem 52]{CS11}).

\begin{theorem}\label{CNtheorem}
If $s\in\left(1/2,1\right)$, and $u$ is a bounded solution of
$$
-(-\Delta)^su=f \quad \textrm{in} \ B_1,
$$
where $f\in L^\infty(B_1)$, then $u\in C^{1+\sigma}(B_{1/2})$, for any $\sigma<2s-1$. Moreover,
$$
\|u\|_{C^{1+\sigma}(B_{1/2})}\le C\left(\|u\|_{L^\infty(\R^n)}+\|f\|_{L^\infty(B_1)}\right),
$$
for a constant $C>0$, depending only on $n$.
\end{theorem}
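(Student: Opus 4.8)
The plan is to run the classical perturbative (Caffarelli-type) scheme: at every dyadic scale, compare $u$ with an $s$-harmonic function, exploit the interior smoothness of the latter provided by \Cref{regularity}, and iterate the resulting affine approximations. First I would normalise, replacing $u$ by $u/\bigl(\|u\|_{L^\infty(\R^n)}+\|f\|_{L^\infty(B_1)}\bigr)$, and, by a routine translation-and-dilation reduction, work near a fixed point of $B_{1/2}$ which we take to be the origin; this lets us assume $\|u\|_{L^\infty(\R^n)}\le 1$ and $\|f\|_{L^\infty(B_1)}\le\e$ for a small $\e>0$ to be chosen.

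The core is an approximation step. I would let $h$ be the $s$-harmonic replacement of $u$ in $B_{3/4}$, that is, the viscosity solution of $(-\Delta)^sh=0$ in $B_{3/4}$ with $h=u$ in $\R^n\setminus B_{3/4}$ (which exists by Perron's method and the comparison principle); then $\|h\|_{L^\infty(\R^n)}\le 1$, so $\|h\|_{L^1_s(\R^n)}\le C$. The difference $w:=u-h$ solves $(-\Delta)^sw=-f$ in $B_{3/4}$ and vanishes outside, so comparing with $\pm\e\Phi$, where $\Phi\ge 0$ is the explicit, bounded torsion function solving $(-\Delta)^s\Phi=1$ in $B_{3/4}$ and $\Phi\equiv 0$ outside, gives $\|u-h\|_{L^\infty(\R^n)}\le C\e$. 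On the other hand, interior estimates for $s$-harmonic functions (\Cref{regularity} applied on $B_{3/4}$, bootstrapped using that difference quotients of $h$ are again $s$-harmonic) yield $\|h\|_{C^2(B_{1/2})}\le C_0$ with $C_0$ universal.

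Next I would fix $\sigma\in(0,2s-1)$ and set $\ell(x):=h(0)+\nabla h(0)\cdot x$, so that $\sup_{B_\rho}|h-\ell|\le C_0\rho^2$ for $\rho\le 1/2$ and hence $\sup_{B_\rho}|u-\ell|\le C\e+C_0\rho^2$. Choosing first $\rho=\rho(\sigma)$ so small that $C_0\rho^2\le\tfrac14\rho^{1+\sigma}$ (possible since $1+\sigma<2$) and then $\e=\e(\sigma)$ with $C\e\le\tfrac14\rho^{1+\sigma}$, we get $\sup_{B_\rho}|u-\ell|\le\tfrac12\rho^{1+\sigma}$. The crucial observation is that, because $s>1/2$, the principal-value integral defining $(-\Delta)^s$ converges on affine functions and vanishes there by antisymmetry; therefore the rescaled function
\[
u_1(x):=\frac{(u-\ell)(\rho x)}{\rho^{1+\sigma}}
\]
solves $-(-\Delta)^su_1=\rho^{2s-1-\sigma}f(\rho\,\cdot)$ in $B_1$, with $\|u_1\|_{L^\infty(B_1)}\le\tfrac12$ and right-hand side still bounded by $\e$ (using $2s-1-\sigma>0$). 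A direct estimate, using that $u-\ell$ grows at most linearly outside $B_1$ and that $1+\sigma<2s$, bounds $|u_1(x)|\le C(1+|x|^{1+\sigma})$ and hence $\|u_1\|_{L^1_s(\R^n)}\le C$, with constants independent of the normalisation, so $u_1$ again meets the standing hypotheses. Iterating produces affine functions $p_k$ with $\sup_{B_{\rho^k}}|u-p_k|\le\rho^{k(1+\sigma)}$ and $|p_{k+1}-p_k|$ controlled by $\rho^{k(1+\sigma)}$ on $B_{\rho^k}$; passing to the limit $p_k\to p_\infty$ gives the pointwise bound $\sup_{B_r}|u-p_\infty|\le Cr^{1+\sigma}$ at the origin, with $C$ controlled by $\|u\|_{L^\infty(\R^n)}+\|f\|_{L^\infty(B_1)}$. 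Since the origin was an arbitrary point of $B_{1/2}$ and all constants are uniform, the Campanato-type characterisation of Hölder spaces yields $u\in C^{1+\sigma}(B_{1/2})$ with the stated estimate.

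The hard part will be the nonlocal tail: one must guarantee that the rescaled and translated iterates stay admissible for the approximation step, in particular that their $L^1_s(\R^n)$-norms do not grow along the iteration. This is exactly where the hypotheses $s>1/2$ (so that $(-\Delta)^s$ annihilates affine functions) and $\sigma<2s-1$ (so that linearly growing tails remain integrable against the weight $(1+|y|^{n+2s})^{-1}$) are needed; everything else is routine geometric-iteration bookkeeping.
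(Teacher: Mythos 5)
The paper does not actually prove this statement: it quotes it from Caffarelli--Silvestre (it is \cite[Theorems 52 and 61]{CS11}), so there is no internal proof to measure your argument against. Your sketch follows the very approximation-and-iteration scheme that underlies \cite{CS11}, so the strategy is the right one; the issue is whether the sketch closes as written, and it does not quite.

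The gap is the assertion that ``$u_1$ again meets the standing hypotheses.'' It does not: after the rescaling $u_1(x)=(u-\ell)(\rho x)/\rho^{1+\sigma}$, the normalisation $\|u_1\|_{L^\infty(\R^n)}\le 1$ is lost, since $u_1$ grows like $|x|^{1+\sigma}$ at infinity. That global $L^\infty$ bound is exactly what you used in the first step, both to get $\|h\|_{L^\infty(\R^n)}\le 1$ (hence a controlled $\|h\|_{L^1_s}$, hence interior $C^2$ bounds via \Cref{regularity}) and to make the torsion-barrier comparison for $u-h$ legitimate. To iterate you must re-state the approximation lemma under the weaker hypotheses $\|v\|_{L^\infty(B_1)}\le 1$ and $\|v\|_{L^1_s(\R^n)}\le K$, and then prove that the $L^1_s$ norms $\|u_k\|_{L^1_s}$ stay \emph{uniformly} bounded along the iteration. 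This is genuinely delicate: the slope of the new affine correction $\ell_k$ is controlled by $\|h_k\|_{C^1}$, which is controlled by $\|u_k\|_{L^1_s}$, which in turn depends on the tail of $u_{k-1}$ and the slope of $\ell_{k-1}$; one must verify that the factor $\rho^{2s-1-\sigma}<1$ produced by the rescaling wins this feedback loop and keeps the tails summable. The hypotheses $s>\tfrac12$ and $\sigma<2s-1$ do enter precisely where you say, but deferring the rest to ``routine geometric-iteration bookkeeping'' leaves unproved the part of the argument that actually distinguishes the nonlocal case from the classical Cordes--Nirenberg estimate. That bookkeeping is the content, not the residue.
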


Finally, we recall Schauder-type estimates for the fractional Laplacian (see, for example, \cite[Remark 9.1]{T24}).

\begin{theorem}\label{Schauder}
    If $s\in(0,1)$ and $(-\Delta)^su\in C^\sigma (B_1)\cap C(\overline{B}_1)$, for some $\sigma>0$, then $u\in C_{\loc}^{\sigma+2s}(B_1)$.
\end{theorem}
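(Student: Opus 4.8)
The plan is to prove \Cref{Schauder} by a freeze-and-iterate scheme of Campanato/Caffarelli type, adapted to the fractional setting, so that everything is reduced to the interior estimate for $s$-harmonic functions recorded in \Cref{regularity}. Write $f:=(-\Delta)^su$, which by hypothesis belongs to $C^\sigma(B_1)\cap C(\overline B_1)$, and assume $\sigma+2s\notin\N$ (the integer exponents requiring the usual Zygmund-type adjustments). By translation and a covering argument, the claim reduces to the following pointwise statement at a fixed interior point, say $0\in B_{1/2}$: there is a polynomial $P$ of degree $k:=\lfloor\sigma+2s\rfloor$ such that
$$
\sup_{B_\rho}|u-P|\le C\rho^{\sigma+2s}\qquad\text{for all sufficiently small }\rho>0,
$$
with $C$ and the coefficients of $P$ controlled by $\|u\|_{L^\infty(\R^n)}+\|u\|_{L^1_s(\R^n)}+\|f\|_{C^\sigma(B_1)}$. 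This is precisely the Taylor-type characterization of $C^{\sigma+2s}_{\loc}(B_1)$, so it suffices to produce $P$.

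The core is a one-step oscillation-decay lemma, whose statement I would normalize so that $\|u\|_{L^\infty(\R^n)}\le1$, $\|u\|_{L^1_s(\R^n)}\le1$ and $\|f\|_{C^\sigma(B_1)}\le\delta$, and which asserts the existence of universal $\rho\in(0,1)$, $C_0$ and $\delta>0$, and of a polynomial $Q$ of degree $\le k$ with bounded coefficients, such that $\sup_{B_\rho}|u-Q|\le\rho^{\sigma+2s}$. To prove it I would first remove the constant part of the source: subtracting the explicit particular solution $f(0)\,\psi_0$, where $\psi_0$ is a fixed multiple of the torsion function $\big((3/4)^2-|x|^2\big)_+^s$ of $B_{3/4}$ (which satisfies $(-\Delta)^s\psi_0=$ const there and is $C^\infty$ in the interior, hence harmless for the regularity claim), one reduces to $f(0)=0$, so that $\|f\|_{L^\infty(B_r)}\le[f]_{C^\sigma}r^\sigma$. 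Next, let $h$ be the $s$-harmonic replacement of $u$ in $B_{3/4}$ (equal to $u$ on $\R^n\setminus B_{3/4}$) and use the same torsion function as a barrier, together with the comparison principle for the fractional Laplacian, to obtain $\|u-h\|_{L^\infty(B_{3/4})}\lesssim\|f\|_{L^\infty(B_{3/4})}\lesssim\delta$. Since $h$ is bounded and $s$-harmonic, \Cref{regularity}, applied at dyadic scales and then bootstrapped on difference quotients (which are again $s$-harmonic on slightly smaller balls), yields $h\in C^\infty(B_{1/2})$ with norms bounded only in terms of $n$, $s$ and $k$; taking $Q$ to be the degree-$k$ Taylor polynomial of $h$ at the origin gives $\sup_{B_\rho}|h-Q|\le C_0\rho^{k+1}$. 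Choosing $\rho$ small (using $k+1>\sigma+2s$) and then $\delta$ small compared to $\rho^{\sigma+2s}$, the triangle inequality $|u-Q|\le|u-h|+|h-Q|$ closes the lemma.

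The estimate then follows by iterating the lemma along the radii $\rho^j$, constructing polynomials $P_j$ of degree $\le k$ with $P_0\equiv0$, $\sup_{B_{\rho^j}}|u-P_j|\le\rho^{j(\sigma+2s)}$ and coefficient increments of order $\rho^{j(\sigma+2s-m)}$ at order $m$, so that $(P_j)$ is Cauchy in $C^k$ and its limit is the desired $P$; the inductive step applies the one-step lemma to $u_j(x):=\rho^{-j(\sigma+2s)}(u-P_j)(\rho^jx)$, whose source has the correct small $C^\sigma$-norm by the scaling of the equation. I expect the \emph{main obstacle} to be exactly the point at which nonlocality interferes with this otherwise local scheme: the rescaled functions $u_j$ are not bounded by a fixed constant outside $B_1$, and, moreover, the equation for $u_j$ formally involves $(-\Delta)^s$ of the subtracted polynomial, which is not classically defined. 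I would deal with this by (i) performing the polynomial subtraction through the $s$-harmonic comparison functions themselves rather than through abstract polynomials, keeping the structure of the equation intact at every scale, and (ii) propagating a suitable weighted tail quantity for $u_j$ through the induction — using the growth estimates already available on the annuli $B_{\rho^i}\setminus B_{\rho^{i+1}}$ from the previous steps, together with $k$ being the correct truncation degree — to absorb the tail contribution into a small perturbation of the right-hand side. A conceptually cleaner, if less flexible, alternative is the potential-theoretic route: with $\chi$ a cutoff equal to $1$ near the origin, one writes $u=\mathcal I_{2s}(\chi f)+g$, where $\mathcal I_{2s}$ is the Riesz potential of order $2s$ (which maps compactly supported $C^\sigma$ densities to $C^{\sigma+2s}_{\loc}$ functions, by classical potential estimates, using the modified kernel when $2s\ge n$) and $g$ is $s$-harmonic near the origin, hence $C^\infty$ there by \Cref{regularity}; the commutator terms generated by $\chi$ are smooth in the interior, and adding the contributions gives $u\in C^{\sigma+2s}_{\loc}(B_1)$.
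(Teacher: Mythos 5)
The paper offers no proof of \Cref{Schauder}: the result is recalled as a known Schauder estimate, with a citation to \cite[Remark 9.1]{T24}, so there is no in-paper argument to compare against line by line. Judged on its own, your proposal is sound and describes the two standard ways this estimate is established. The Campanato/Caffarelli iteration is the route taken in the nonlocal references the present paper leans on (\cite{CS11,S15}), and you correctly single out where nonlocality interferes with the otherwise local scheme: after subtracting a polynomial the rescaled $u_j$ need not be bounded outside $B_1$, and $(-\Delta)^s$ applied to a nonconstant polynomial is not classically defined. Your remedy --- keeping the $s$-harmonic comparison functions in the induction rather than abstract polynomials, and propagating a weighted $L^1_s$-tail estimate on the annuli $B_{\rho^i}\setminus B_{\rho^{i+1}}$ from the previous steps --- is exactly how Serra and Caffarelli--Silvestre close this gap, so the plan is faithful even if the tail bookkeeping is left implicit. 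The potential-theoretic alternative you sketch is the cleaner argument for the specific operator $(-\Delta)^s$: writing $u=\mathcal I_{2s}(\chi f)+g$ with $\chi\equiv1$ near the origin, one has $(-\Delta)^s g=(1-\chi)f=0$ near the origin (there are in fact no commutator terms, since you subtract $\mathcal I_{2s}(\chi f)$ rather than cut off $u$), so $g$ is $s$-harmonic there and smooth by bootstrapping \Cref{regularity}, while $\mathcal I_{2s}(\chi f)\in C^{\sigma+2s}_{\loc}$ by classical Riesz-potential estimates, with the modified kernel when $2s\ge n$ as you note; this second route is essentially complete and is the one I would present. In short: no genuine gap, and your proposal is a legitimate reconstruction of the standard proof the paper cites rather than reproduces.
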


\medskip

\section{Liouville-type results}\label{s3}

This section and the next form the core of the paper. We prove Liouville-type results in the spirit of \cite{S15, HY} that will be used later to derive the sharp regularity for the dead-core problem.

\begin{theorem}\label{SeminormGrowth}
    Let $s\in(1/2,1)$ and
    \begin{equation}\label{Zeroquotient}
    (-\Delta)^s \left( u(x+h)-u(x) \right)=0, \quad x\in B_1,  
    \end{equation}    
    for all $h\in\R^n$. If, for $0\le\alpha<\beta<1$, there holds
    \begin{equation}\label{crescimento da norma}
    [u]_{C^{\nu+\alpha}(B_R)}\le CR^{\beta-\alpha},\quad \forall R\ge1,
    \end{equation}
    where $\nu=1,2$, and $C>0$ is a constant, depending only on $\alpha$ and $\beta$, then 
    $$
    u(x)=u(0)+Du(0) \cdot x+ \frac{\nu-1}{2} x^T \cdot D^2u(0) x, \quad x\in\R^n.
    $$
\end{theorem}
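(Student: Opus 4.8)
The plan is to prove that $u$ is a polynomial of degree at most $\nu$; since $\nu\le 2$, such a polynomial coincides with its own degree-$\nu$ Taylor expansion at the origin, which is exactly the asserted expression. Everything hinges on upgrading the \emph{local} hypothesis \eqref{Zeroquotient} to a \emph{global} statement for incremental quotients.

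\emph{Step 1 (globalizing \eqref{Zeroquotient}).} Write $\delta_h u:=u(\cdot+h)-u$. The pointwise identity $\delta_{h+k}u=(\delta_h u)(\cdot+k)+\delta_k u$ and the linearity of $(-\Delta)^s$ on $L^1_s$ give, for all $h,k$, that $(-\Delta)^s\big[(\delta_h u)(\cdot+k)\big]=(-\Delta)^s\delta_{h+k}u-(-\Delta)^s\delta_k u=0$ in $B_1$, since both terms on the right vanish in $B_1$ by \eqref{Zeroquotient} (here I use that functions which are $s$-harmonic in $B_1$ are $C^\infty$ there by \Cref{regularity}, so the cancellation is classical). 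Since $(-\Delta)^s[(\delta_h u)(\cdot+k)](x)=[(-\Delta)^s\delta_h u](x+k)$, this says $\delta_h u$ is $s$-harmonic in $B_1+k$ for every $k$, hence in $\bigcup_k(B_1+k)=\R^n$. Thus $(-\Delta)^s\delta_h u=0$ in all of $\R^n$, for every $h$. When $\nu=2$ the first increments $\delta_h u$ need not lie in $L^1_s$; one then runs the same argument on the second increments $\delta_k\delta_h u$, reading \eqref{Zeroquotient} modulo affine functions (on which $(-\Delta)^s$ vanishes). This is the one place where $s>1/2$ and $\nu\le 2$ enter: they guarantee that the top-order increment of $u$, and all its translates, belong to $L^1_s$.

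\emph{Step 2 (growth and a baby Liouville).} From \eqref{crescimento da norma} and finiteness of $D^\nu u(0)$ one extracts $|D^\nu u(x)|\le C(1+|x|)^\beta$, hence for the $\nu$-fold iterated difference $|\delta_{h_1}\!\cdots\delta_{h_\nu}u(x)|\le C\,|h_1|\cdots|h_\nu|\,(1+|x|)^\beta$ with $\beta<1$. Since $\beta<1<2s$, each such increment lies in $L^1_s(\R^n)$ and, by Step 1, is $s$-harmonic in $\R^n$. I would then invoke the following baby Liouville statement: an entire $s$-harmonic function $\psi$ with $|\psi(x)|\le C(1+|x|)^\mu$, $\mu<1$, is constant. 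This follows from \Cref{regularity} applied to the rescalings $\psi_R:=\psi(R\,\cdot)$, which are still globally $s$-harmonic with $\|\psi_R\|_{L^\infty(B_1)}+\|\psi_R\|_{L^1_s(\R^n)}\le CR^\mu$; the estimate gives $\|D\psi\|_{L^\infty(B_{R/2})}\le CR^{\mu-1}\to 0$ as $R\to\infty$, so $D\psi\equiv 0$.

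\emph{Step 3 (conclusion).} Applying the baby Liouville to $\psi=\delta_{h_1}\!\cdots\delta_{h_\nu}u$ shows every $\nu$-th order increment of $u$ is constant; equivalently, all $(\nu+1)$-th increments of the continuous function $u$ vanish, so by the classical characterization of polynomials through finite differences $u$ is a polynomial of degree at most $\nu$. Since $\nu\le 2$ it equals its degree-$\nu$ Taylor polynomial at the origin, which is $u(0)+Du(0)\cdot x+\tfrac{\nu-1}{2}\,x^{T}\!D^{2}u(0)\,x$. The step I expect to be the main obstacle is the integrability bookkeeping behind Steps 1--2: one must check that the relevant increments of $u$ — and every translate and linear combination of them appearing in the globalization — genuinely belong to $L^1_s$, so that $(-\Delta)^s$ is classically defined and linear on them. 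This is precisely what pins down the admissible ranges of $s$ and $\nu$, and, in the application to \eqref{equation}, the range of $\gamma$.
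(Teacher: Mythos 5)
Your proposal is correct and is, at bottom, the same argument as the paper's, repackaged in the language of iterated finite differences rather than derivatives. The paper subtracts off the Taylor polynomial, passes to the directional derivative $w_e:=D(u-P_\nu)\cdot e$ (or $w_{ee}$), and shows $w_e$ is $s$-harmonic with growth $O(|x|^\beta)$, then runs the rescaling $v(x)=\rho^{-\beta}w_e(\rho x)$ together with \Cref{regularity} to force $Dw_e\equiv 0$; your $\nu$-fold increments $\delta_{h_1}\cdots\delta_{h_\nu}u$ play exactly the role of $w_e$, and your ``baby Liouville'' is precisely that rescaling-plus-interior-estimate step. Where you genuinely improve the exposition is Step~1: you make explicit, via the cocycle identity $\delta_{h+k}u=(\delta_hu)(\cdot+k)+\delta_ku$ and translation invariance, that the equation $(-\Delta)^s\delta_hu=0$ propagates from $B_1$ to all of $\R^n$. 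The paper needs this globalization too (its application of \Cref{regularity} to the rescaled $v$, and the closing remark ``since the fractional Laplacian is translation invariant, we deduce that $Dw_e\equiv0$,'' implicitly rely on it), but never states it; your argument supplies the missing justification cleanly. One place where you are still slightly hand-wavy, and rightly flag as the delicate point, is the $\nu=2$ integrability bookkeeping: the first increments $\delta_hu$ grow like $|x|^{1+\beta}$, which lies in $L^1_s$ only when $1+\beta<2s$, and ``reading \eqref{Zeroquotient} modulo affine functions'' would need to be spelled out for the linearity step in the globalization to be legitimate. The paper sidesteps this by working directly with the bounded-growth object $w_{ee}$ and invoking stability, and in the application to \eqref{equation} the restriction $\gamma<1/(2s+1)$ is exactly what makes $1+\beta<2s$; so your observation is accurate but would need a sentence of justification to be airtight at the level of generality of \Cref{SeminormGrowth}. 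The finishing move (all $(\nu+1)$-th differences vanish, hence $u$ is a polynomial of degree $\le\nu$) is a clean alternative to the paper's ``$w_e$ constant, then zero, then $w$ constant, then zero.''
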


\begin{proof} 
We divide the proof into two steps, depending on the value of $\nu$.

\medskip

\textsc{Step 1.} Suppose $\nu=1$, and let $e\in\R^n$, with $|e|=1$. Set 
$$
w(x):=u(x)-u(0)-Du(0)\cdot x,
$$
and note that $w(0)=|Dw(0)|=0$, and $w$ satisfies \eqref{Zeroquotient}-\eqref{crescimento da norma}. Define $w_e:= Dw\cdot e$. Since $w_e(0)=0$, \eqref{crescimento da norma} yields
\begin{equation}\label{gradestimateoudsite}
    \begin{split}
    |w_e(x)|&= |w_e(x)-w_e(0)|\leq [w]_{C^{1,\alpha}(B_{|x|})}|x|^{\alpha}\\
    &\le C |x|^{\beta-\alpha} |x|^{\alpha}=C|x|^\beta,
    \end{split}
\end{equation}
for all $x \notin B_1$. Hence, since $\beta<1<2s$,
$$
\int_{B_1^c}\frac{|w_e(y)|}{|y|^{n+2s}}\,dy \leq C \int_{B_1^c}\frac{|y|^\beta}{|y|^{n+2s}}\,dy = C \int_{B_1^c}\frac{1}{|y|^{n+2s-\beta}}\,dy<\infty.
$$
On the other hand, \eqref{crescimento da norma} with $R=1$ gives
\begin{equation}\label{gradestimateinside}
    |w_e(x)|\le C, \quad x \in  B_1.
\end{equation}
By stability (see \cite[Lemma 5]{CS11}, and also \cite[Lemma 4.5 and Corollary 4.7]{CS09}), \eqref{Zeroquotient} then implies 
$$
(-\Delta)^s w_e=0 \quad \textrm{in} \ B_1.
$$
By \Cref{regularity}, there exists a constant $\sigma>0$, depending only on $n$, such that
$$
\|w_e\|_{C^{1,\sigma}\left( B_{1/2} \right)}<C_1,
$$
for a constant $C_1>0$, depending only on $n$ and $\beta$. 

Now, for $\rho>0$, set 
$$
v(x):=\rho^{-\beta} w_e(\rho x)
$$
and, recalling \eqref{gradestimateoudsite}, note that 
$$
|v(x)|=\rho^{-\beta}|w_e(\rho x)|\le C|x|^\beta, \quad\mbox{for all }\ x \notin B_{1/\rho}.
$$
Also, using \eqref{gradestimateinside}, in $B_{1/\rho}$ we have
$$
|v(x)|=\rho^{-\beta}|w_e(\rho x)|\le C\rho^{-\beta}<C,
$$
for a $\rho>0$ large enough. Thus,
$$
\|v\|_{L^\infty(B_1)}<C.
$$
Hence, again by \Cref{regularity}, there exists $\sigma_*>0$ such that 
$$
\|v\|_{C^{1,\sigma_*}(B_{1/2})}\le C_2 \left(\|v\|_{L^{\infty}(B_1)}+\|v\|_{L^1_{s}(\R^n)}\right),
$$
where $C_2>0$ is a constant, depending only on $n$. Since
$$
\| Dv\|_{L^{\infty}(B_{1/2})}=\rho^{1-\beta}\| Dw_e\|_{L^{\infty}(B_{1/2\rho})},
$$
if follows that  
\begin{equation}\label{important estimate}
    \| Dw_e\|_{L^{\infty}(B_{1/2\rho})}\le\rho^{\beta-1}C_3,
\end{equation}
where $C_3>0$ is a constant, depending only on $n$, $\beta$ and $C>0$ from \eqref{crescimento da norma}. Letting $\rho\to\infty$ in \eqref{important estimate}, one gets $Dw_e(0)=0$, and since the fractional Laplacian is translation invariant, we deduce that $Dw_e\equiv 0$, for any $e\in\R^n$, $|e|=1$. Hence, $w_e$ is a constant, and as $w_e(0)=0$, $w_e$ must be identically zero, implying that $w$ is a constant. But $w(0)=0$, therefore $w\equiv0$, and hence, 
$$
u(x)=u(0)+Du(0)\cdot x.
$$

\medskip

\textsc{Step 2.} If $\nu=2$, set 
$$
w(x):=u(x)-u(0)-Du(0)\cdot x-\frac{1}{2}x^T\cdot D^2u(0)x,
$$
and observe that it satisfies \eqref{Zeroquotient}-\eqref{crescimento da norma}, while $w(0)=|w_{e}(0)|=|w_{ee}(0)|=0$, where $e\in\R^n$ is a unit vector. Arguing as above, we arrive at $Dw_{ee}=0$ for any unit vector $e\in\R^n$, meaning that $w_{ee}$ is a constant, \textit{i.e.}, $w_{ee}\equiv0$. The latter yields $w_e=0$, which then gives $w\equiv0$. Thus, 
$$
u(x)=u(0)+Du(0)\cdot x+\frac{1}{2}x^T\cdot D^2u(0)x.
$$
\end{proof}

The following result is a simple consequence of the mean value theorem.

\begin{lemma}\label{caracterizacao de c1alpha}
Let $u\in C^{\nu,\alpha}(\R^n)$, where $\nu=1,2$ and $\alpha\in(0,1)$. If 
$$
u(0)=|D^{\nu-1}u(0)|=|D^\nu u(0)|=0
$$
and
$$
\sup_{0<r<1/2}r^{\alpha-\beta}[u]_{C^{\nu+\alpha}(B_r)}\le A,
$$
for $\beta>\alpha$ and a constant $A>0$, then
$$
|u(x)|\leq A|x|^{\nu+\beta}, \quad x\in B_{1/2}.
$$
\end{lemma}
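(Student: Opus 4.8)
My plan is to read the conclusion off Taylor's theorem: expand $u$ at the origin to order $\nu$, observe that the vanishing conditions $u(0)=|D^{\nu-1}u(0)|=|D^\nu u(0)|=0$ force the \emph{entire} Taylor polynomial of degree $\nu$ to vanish (for $\nu\in\{1,2\}$ these conditions already cover all derivatives of order $\le\nu$), so that $u(x)$ equals precisely the $\nu$-th order remainder, and then estimate that remainder by the oscillation of $D^\nu u$ on the small ball $B_{|x|}$. Concretely, for $\nu=1$ I would invoke the mean value theorem, and for $\nu=2$ Taylor's formula with Lagrange (or integral) remainder, to obtain the standard bound
\[
|u(x)|\;\le\;C\,|x|^{\nu}\ \sup_{|\mu|=\nu}\ \sup_{y\in[0,x]}\ \bigl|D^{\mu}u(y)-D^{\mu}u(0)\bigr|,
\]
for a constant $C$ depending only on $n$ (and $\nu$); it is exactly here, in passing from the vector/quadratic-form remainder to the componentwise derivatives that enter the definition of $[\,\cdot\,]_{C^{\nu+\alpha}}$, that a harmless dimensional factor appears — for $\nu=2$ it comes from $|x^{T}Mx|\le C|x|^{2}\max_{ij}|M_{ij}|$ applied to $M=D^{2}u(y)-D^{2}u(0)$.

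Next I would convert the supremum into the decay we want. For $y$ on the segment from $0$ to $x$ and $|\mu|=\nu$, Hölder continuity of $D^{\mu}u$ gives $|D^{\mu}u(y)-D^{\mu}u(0)|\le[D^{\mu}u]_{C^{\alpha}(B_{|x|})}\,|y|^{\alpha}\le[u]_{C^{\nu+\alpha}(B_{|x|})}\,|x|^{\alpha}$, and since $|x|<1/2$ the standing hypothesis, rewritten as $[u]_{C^{\nu+\alpha}(B_{r})}\le A\,r^{\beta-\alpha}$ for $0<r<1/2$, bounds this by $A\,|x|^{\beta-\alpha}\,|x|^{\alpha}=A\,|x|^{\beta}$. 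Plugging back in yields $|u(x)|\le C\,A\,|x|^{\nu+\beta}$ on $B_{1/2}$; the dimensional constant $C$ is immaterial for the way the lemma is used later and may be absorbed into $A$.

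I do not expect any serious obstacle — the lemma is, as announced, essentially a restatement of the mean value theorem. The two points that deserve a line of justification are: (i) that the vanishing hypotheses really annihilate the whole degree-$\nu$ Taylor polynomial, so the remainder \emph{is} $u(x)$ (this is also where one uses $D^{\nu}u(0)=0$ to replace $D^{\nu}u(\xi)$ by the difference $D^{\nu}u(\xi)-D^{\nu}u(0)$, without which no decay is available); and (ii) that one must evaluate the $C^{\nu+\alpha}$-seminorm on the shrinking ball $B_{|x|}$ rather than on a fixed one — this localization is exactly what upgrades the Schauder-type exponent $\nu+\alpha$ to the sharper $\nu+\beta$, trading the excess Hölder regularity $\beta-\alpha$ of $D^\nu u$ near the origin for an extra power of $|x|$.
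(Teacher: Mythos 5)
Your proof is correct and takes essentially the same route as the paper, which iterates the mean value theorem (the $\nu=2$ case of your Lagrange--remainder Taylor expansion), localizes the $C^{\nu+\alpha}$-seminorm to the shrinking ball $B_{|x|}$, and invokes the rewritten hypothesis $[u]_{C^{\nu+\alpha}(B_r)}\le A\,r^{\beta-\alpha}$. The dimensional constant $C$ you flag when passing from the vector/matrix remainder to the componentwise seminorm $[u]_{C^{\nu+\alpha}}=\max_{|\beta|=\nu}[D^{\beta}u]_{C^{\alpha}}$ is in fact also implicitly present (and silently dropped) in the paper's argument, and, as you observe, it is harmless for the way the lemma is used.
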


\begin{proof}
As $u(0)=0$, by the mean value theorem, for $x\in B_{1/2}$, one has
\begin{equation}\label{ineq1}
   |u(x)|=|u(x)-u(0)|\le|Du(\xi_1)||x|,
\end{equation}
for some $\xi_1\in B_{|x|}$. 

For $\nu=1$, since $Du(0)=0$ and $Du\in C^{\alpha}\left(B_{1/2}\right)$, we have
$$
\frac{|Du(\xi_1)||x|}{|x|^{1+\beta}}=\frac{|Du(\xi_1)-Du(0)||x|}{|x|^{1+\beta}}\le|x|^{\alpha-\beta}\left[u\right]_{C^{1+\alpha} \left( B_{|x|}\right)}\le A,
$$
which combined with \eqref{ineq1} concludes the proof in this case. 

Similarly, if $\nu=2$, since $|Du(0)|=|D^2u(0)|=0$, employing the mean value theorem once more, for some $\xi_2\in B_{|\xi_1|}$, we estimate
\begin{equation*}
    \begin{split}
        \frac{|Du(\xi_1)||x|}{|x|^{2+\beta}}&=\frac{|Du(\xi_1)-Du(0)||x|}{|x|^{2+\beta}}\\
        &\le\frac{|D^2u(\xi_2)-D^2u(0)||x|^2}{|x|^{2+\beta}}\\
        &\le|x|^{\alpha-\beta}\left[u\right]_{C^{2+\alpha}\left( B_{|x|}\right)}\\
        &\le A,
    \end{split}
\end{equation*}
which concludes the proof also for $\nu=2$, thanks to \eqref{ineq1}.
\end{proof}

\medskip

\section{Improved regularity at branching points}\label{s4}
In this section, we prove the main result of this paper. As observed earlier, unlike the local one-phase problem treated in \cite{T16}, we cannot rely on the maximum principle to derive sharp regularity estimates. We start with the precise definition of a branching point.

\begin{definition}\label{branching points}
A point $x_0 \in B_1$ is called a branching point for the function $u:B_1 \to \R$ if 
$$u(x_0)=|Du(x_0)|=|D^2u(x_0)|=0.$$
\end{definition}

\begin{theorem}\label{main result 1}
    Let $\gamma\in(0,1/3)$ and $s>1-\frac{\gamma}{2}$. If $u\in L^\infty(\R^n)$ is a viscosity solution of \eqref{equation} and $x_0\in B_{1/2}$ is its branching point, then there exists a constant $C>0$, depending only on $n$ and $\gamma$, such that
\begin{equation}\label{growth estimate 2025}  
    |u(x)|\le C\|u\|_{L^\infty(\R^n)}|x-x_0|^{\frac{2s}{1-\gamma}}, \quad \forall x \in B_{1/2}(x_0).      
\end{equation} 
As a consequence, $u\in C^{\frac{2s}{1-\gamma}}$ at branching points.
\end{theorem}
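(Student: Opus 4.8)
The plan is to run a contradiction/compactness argument that combines the Liouville-type result of \Cref{SeminormGrowth} with \Cref{caracterizacao de c1alpha}, using the scaled functions $v_r(x)=u(rx)/r^{\frac{2s}{1-\gamma}}$ around $x_0$. After translating so that $x_0=0$, set $\beta:=\frac{2s-1+\gamma}{1-\gamma}\in(0,1)$ (this is where $\gamma<1/3$ and $s>1-\gamma/2$ enter, guaranteeing $0<\beta<1$ and the relevant integrability), and $\nu$ as in the introduction (here $\nu=2$ since $s>1-\gamma/2$), so that $\nu+\beta=\frac{2s}{1-\gamma}$. By \Cref{CNtheorem}, $u\in C^{1+\sigma}_{\loc}$ for $\sigma<2s-1$, and a bootstrap through \Cref{Schauder} (using that $u_+^\gamma-u_-^\gamma$ is locally $C^\gamma$ away from issues, or rather that it is bounded and then Hölder) upgrades this to $u\in C^{2,\alpha}_{\loc}$ for suitable $\alpha$; in particular $D^2u$ is continuous near $0$ and the branching condition $u(0)=|Du(0)|=|D^2u(0)|=0$ makes sense. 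The quantity to control is
$$
\Theta(r):=\sup_{0<\rho\le r}\rho^{-\frac{2s}{1-\gamma}}\,\|u\|_{L^\infty(B_\rho)},
$$
and the goal is $\Theta(1/2)\le C\|u\|_{L^\infty(\R^n)}$; equivalently, by \Cref{caracterizacao de c1alpha} it suffices to bound $\sup_{0<\rho<1/2}\rho^{\alpha-\beta}[u]_{C^{\nu+\alpha}(B_\rho)}$.

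Assume, for contradiction, that no such $C$ exists: then there are solutions $u_j$ with $\|u_j\|_{L^\infty(\R^n)}\le 1$, branching points at $0$, and radii $r_j\to 0$ along which the normalized seminorm blows up. Normalize by setting
$$
w_j(x):=\frac{u_j(r_j x)}{L_j\, r_j^{\nu+\beta}}, \qquad L_j:=\sup_{0<\rho<1/2}\rho^{\alpha-\beta}[u_j]_{C^{\nu+\alpha}(B_\rho)}\to\infty,
$$
so that, by construction, $w_j$ satisfies a uniform growth bound of the form $[w_j]_{C^{\nu+\alpha}(B_R)}\le C R^{\beta-\alpha}$ for all $1\le R\le 1/(2r_j)$ (this is the standard "optimal radius" selection, choosing $r_j$ so the scaled seminorm is normalized at scale $1$ and controlled at larger scales), and $w_j(0)=|Dw_j(0)|=|D^2w_j(0)|=0$ with $[w_j]_{C^{\nu+\alpha}(B_1)}=1$. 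The equation for $w_j$ reads
$$
-(-\Delta)^s w_j = \frac{r_j^{2s}}{L_j\, r_j^{\nu+\beta}}\Big((u_j)_+^\gamma-(u_j)_-^\gamma\Big)(r_j\cdot)
= \frac{1}{L_j}\,\big(|w_j|^{\gamma}\,\mathrm{sign}(w_j)\big)\cdot (L_j)^{\gamma}\,r_j^{\,2s-(\nu+\beta)+\gamma(\nu+\beta)},
$$
and the choice $\nu+\beta=\frac{2s}{1-\gamma}$ makes the power of $r_j$ vanish, leaving a right-hand side bounded in absolute value by $L_j^{\gamma-1}\|w_j\|_{L^\infty}^\gamma$ on compact sets — and the growth bound forces $\|w_j\|_{L^\infty(B_R)}\le CR^{\nu+\beta}$, so on each fixed ball $B_R$ the right-hand side is $O(L_j^{\gamma-1})\to 0$.

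The compactness step: the uniform $C^{\nu+\alpha}_{\loc}$ bounds (together with the integrability estimate $\|w_j\|_{L^1_s(\R^n)}<\infty$ uniformly, which is exactly where the range of $\gamma$ is used so that $\nu+\beta<n+2s$ and the tails converge — this is the "certain integrability condition" flagged in the introduction) let us extract a subsequence with $w_j\to w_\infty$ in $C^{\nu}_{\loc}(\R^n)$, where $w_\infty$ inherits the polynomial growth bound \eqref{crescimento da norma}, satisfies $w_\infty(0)=|Dw_\infty(0)|=|D^2w_\infty(0)|=0$, and solves $-(-\Delta)^s w_\infty=0$ in all of $\R^n$ — more precisely, applying the stability of viscosity solutions under the difference quotient construction as in \Cref{SeminormGrowth}, the increments of $w_\infty$ satisfy \eqref{Zeroquotient}. \Cref{SeminormGrowth} then forces $w_\infty$ to be a polynomial of degree $\le\nu$, and the vanishing of $w_\infty$ and its derivatives up to order $\nu$ at $0$ kills it entirely: $w_\infty\equiv0$. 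This contradicts the normalization $[w_j]_{C^{\nu+\alpha}(B_1)}=1$ passing to the limit (using that the $C^{\nu+\alpha}$ seminorm at scale $1$ is preserved, or at least lower semicontinuous and bounded below, under $C^\nu_{\loc}$ convergence — a point that needs the Hölder seminorm to be realized, handled by a further diagonal argument or by noting convergence in $C^{\nu,\alpha'}_{\loc}$ for $\alpha'<\alpha$ is still enough after re-examining where the unit normalization is attained).

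The main obstacle I expect is twofold. First, the uniform tail control: one must verify $\|w_j\|_{L^1_s(\R^n)}$ stays bounded, which requires reconciling the polynomial growth $|w_j(x)|\lesssim |x|^{\nu+\beta}$ valid only up to $|x|\sim 1/r_j$ with the genuine global bound $|u_j|\le 1$ beyond that scale; the decomposition of the integral at $|x|\sim 1/r_j$ must be done carefully and is precisely where $\nu+\beta<n+2s$, hence $\gamma<1/3$ (for $\nu=2$), is needed. Second, transferring the equation to the limit in the viscosity sense through the difference-quotient/stability machinery — ensuring $-(-\Delta)^s w_\infty=0$ rather than merely $\le 0$ or $\ge 0$ — needs the two-sided stability results of \cite{CS09, CS11} applied along the lines of the proof of \Cref{SeminormGrowth}, and care that the test functions survive the scaling. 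Once these are in place, the contradiction and the conclusion $u\in C^{\frac{2s}{1-\gamma}}$ at $x_0$ via \Cref{caracterizacao de c1alpha} are routine.
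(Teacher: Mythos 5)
Your proposal follows essentially the same contradiction/compactness architecture as the paper: bootstrap to $C^{2s+\gamma}_{\loc}$, normalize by a truncated supremum of the H\"older seminorm at the (nearly) optimal scale, blow up, pass the equation to the limit via stability, invoke the Liouville theorem of \Cref{SeminormGrowth} and the characterization of \Cref{caracterizacao de c1alpha}, and contradict the unit normalization. Two concrete slips should be repaired, both of which you partially flag yourself.

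First, the parameter $\beta$ is wrong for the case at hand. Since $s>1-\gamma/2$ forces $\nu=2$, the consistency condition $\nu+\beta=\tfrac{2s}{1-\gamma}$ requires $\beta=\tfrac{2s}{1-\gamma}-2$; the formula you wrote, $\beta=\tfrac{2s-1+\gamma}{1-\gamma}=\tfrac{2s}{1-\gamma}-1$, is the $\nu=1$ value. With your $\beta$, $\nu+\beta=1+\tfrac{2s}{1-\gamma}\neq\tfrac{2s}{1-\gamma}$, so the exponent of $r_j$ in the scaled equation does not cancel and the whole normalization is off. The paper takes $\alpha=2s+\gamma-2$, $\beta=\tfrac{2s}{1-\gamma}-2$, both in $(0,1)$ under the stated hypotheses.

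Second, the tail control: it is false in general that $\|w_j\|_{L^1_s(\R^n)}$ stays bounded, and one should not try to prove it. The blow-up grows like $|w_j(x)|\lesssim|x|^{\frac{2s}{1-\gamma}}$ on its natural range, and $\tfrac{2s}{1-\gamma}>2s$, so $w_j$ itself cannot be $L^1_s$. This is precisely why both \Cref{SeminormGrowth} and the limit passage operate on the increments $w_j(\cdot+h)-w_j(\cdot)$: by the mean value theorem and the seminorm bound, these grow only like $|x|^{\frac{2s}{1-\gamma}-1}$, and $\tfrac{2s}{1-\gamma}-1<2s$ is equivalent to $\gamma<\tfrac{1}{1+2s}$, which is implied by $\gamma<\tfrac13$ and $s<1$. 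Your proposed condition $\nu+\beta<n+2s$ is not the one that enters. Once you keep the stability argument at the level of increments throughout (as in the proof of \Cref{SeminormGrowth}), the remaining steps are as you sketch them.
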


\begin{proof}
By \Cref{CNtheorem} and \Cref{Schauder}, we have $u\in C^{2s+\gamma}(B_{1/2})$. Moreover,
$$
\|u\|_{C^{2s+\gamma}(B_{1/2})}\le C\left(\|u\|_{L^\infty(\R^n)}+\|u\|_{L^\infty(B_1)}^\gamma\right),
$$
for a constant $C>0$, depending only on $n$ and $s$. With no loss of generality, we may assume $x_0=0$ and $\|u\|_{L^\infty(\R^n)}=1$,
and we need to show that
\begin{equation}\label{nu=1}
    |u(x)|\le C|x|^\frac{2s}{1-\gamma}.
\end{equation}
We argue by contradiction and assume that \eqref{nu=1} fails. Then, there exist sequences $u_k$ of solutions of \eqref{equation} and points $x_k$, such that $0$ is a branching point of $u_k$, and
$$
[u_k]_{C^{2s+\gamma}(B_{1/2})}\le2C,
$$
with
\begin{equation}\label{contradictory assumption}
    |u_k(x_k)|>k|x_k|^{\frac{2s}{1-\gamma}}.
\end{equation}
Set 
\begin{equation}\label{definitiontheta}
    \theta_k(r^{\prime} ):=\sup_{r^{\prime} <r<1/2}r^{2s+\gamma-\frac{2s}{1-\gamma}}\left[u_k\right]_{C^{2s+\gamma}(B_r)}.
\end{equation}
Observe that $2s+\gamma-\frac{2s}{1-\gamma}<0$ and
$$
\lim_{r^{\prime}\rightarrow 0}\theta_k(r^{\prime} )=\sup_{0<r<1/2}r^{2s+\gamma-\frac{2s}{1-\gamma}}\left[u_k\right]_{C^{2s+\gamma}(B_r)}.
$$
Then, \eqref{contradictory assumption} and \Cref{caracterizacao de c1alpha} yield
$$
\lim_{r^{\prime}\rightarrow 0}\theta_k(r^{\prime})>k.
$$
Thus, there exists a sequence $r_k>\frac{1}{k}$, such that
\begin{equation}\label{contradicao2}
r_k^{2s+\gamma-\frac{2s}{1-\gamma}}\left[u_k\right]_{C^{2s+\gamma}(B_{r_k})}\ge\frac{1}{2}\theta_k(1/k)\ge\frac{1}{2}\theta_k(r_k). 
\end{equation}
From the first inequality in \eqref{contradicao2}, we conclude that 
$$
r_k \longrightarrow 0,\quad \textrm{as}\quad k\to\infty.
$$
Set now 
\begin{equation}\label{definitionvk}
    v_k(x_k):=\frac{u_k(r_k x_k)}{\theta_k(r_k)r_k^{\frac{2s}{1-\gamma}}}
\end{equation}
and note that, for $1\le R\le\frac{1}{2r_k}$, one has
\begin{equation}\label{vkestimate}
\begin{split}
    \left[v_k\right]_{C^{2s+\gamma}(B_R)}&=\frac{1}{\theta_k(r_k)r_k^{\frac{2s}{1-\gamma}}}\left[u_k\right]_{C^{2s+\gamma}(B_{r_kR})}r_k^{2s+\gamma}\\
    &=\frac{(r_kR)^{2s+\gamma-\frac{2s}{1-\gamma}}}{\theta_k(r_k)}\left[u_k\right]_{C^{2s+\gamma}(B_{r_k R})}R^{\frac{2s}{1-\gamma}-2s-\gamma}.
    \end{split}
\end{equation}
Since
$$
(r_k R)^{2s+\gamma-\frac{2s}{1-\gamma}}\left[u_k\right]_{C^{2s+\gamma}(B_{r_k R})}\le\theta_k(r_k R)\le\theta_k(r_k),
$$
employing \eqref{vkestimate}, we get
\begin{equation}\label{seminorm estimate on vk}
    [D^2v_k]_{C^{2s+\gamma-2}(B_R)}=\left[v_k\right]_{C^{2s+\gamma}(B_R)}\le R^{\frac{2s}{1-\gamma}-2s-\gamma},\quad\forall R\ge1.
\end{equation}
Since $0$ is a branching point for $v_k$, using the mean value theorem and \eqref{seminorm estimate on vk}, one has
\begin{equation*}
    \begin{split}
        |v_k(x)|&\le |Dv_k(\xi)||x|\le |D^2v_k(\xi')||x|^2\\
        &\le[D^2v_k]_{C^{2s+\gamma-2}(B_R)}|x|^{2s+\gamma}\\
        &\le R^{\frac{2s}{1-\gamma}-2s-\gamma}|x|^{2s+\gamma},
    \end{split}
\end{equation*}
where $\xi$ is a point on the line segment connecting the origin to $x$, and $\xi'$ is a point on the line segment connecting the origin to $\xi$. Now, if $\eta$ is a smooth function such that $\eta \equiv 1$ in $B_{1/2}$ and $\eta = 0$ outside $B_1$, then for any unit vector $e$, one has
$$
\int_{B_1}\eta\cdot D_{ee} v_k\,dx=\int_{B_1}D_{ee}\eta\cdot v_k\,dx\leq C_n,
$$
where $C_n>0$ is a constant, depending only on $n$. Therefore, there exists $z\in B_1$ such that $|D^2v_k(z)|\leq C_n$, and \eqref{seminorm estimate on vk} implies 
$$
|D_{ee}v_k(x)-D_{ee}v_k(z)|\le R^{\frac{2s}{1-\gamma}-2s-\gamma}|x-z|^{2s+\gamma-2}.
$$
Hence, for $x\in B_R$ and $1\leq R \le\frac{1}{2r_k}$, we get
$$
|D_{ee}v_k(x)|\le C_n+ R^{\frac{2s}{1-\gamma}-2s-\gamma}|x-z|^{2s+\gamma-2}\le CR^{\frac{2s}{1-\gamma}-2},
$$
for a constant $C>0$, depending only on $n$. Thus, up to a subsequence, $v_k$ converges to some $v_0$ in $C^{2s+\gamma}_{\loc}(B_R)$, as $k\to\infty$, and thanks to \eqref{seminorm estimate on vk},
\begin{equation}\label{v0estimate}
\left[v_0\right]_{C^{2s+\gamma}(B_R)}\le R^{\frac{2s}{1-\gamma}-2s-\gamma}, \quad \forall R\ge1.
\end{equation}
Additionally, from the second inequality in \eqref{contradicao2}, we deduce that
\begin{equation}\label{limitseminormestimate2}
     \left[v_0\right]_{C^{2s+\gamma}(B_{1})}\ge\frac{1}{2}.
\end{equation}
Observe that, for a fixed $h\in\R^n$, using the mean value theorem and \eqref{seminorm estimate on vk}, for  $|x|\ge1$, we have
\begin{equation}\label{vkgrowth2}
    \begin{split}
    |v_k(x+h)-v_k(x)|&\le|Dv_k(\eta)||h|\\
    &\le|D^2v_k(\eta')|(|x|+|h|)|h|\\
    &\le[v_k]_{C^{2s+\gamma}(B_{|x|+|h|})}(|x|+|h|)^{2s+\gamma-1}|h|\\
     &\le (|x|+|h|)^{\frac{2s}{1-\gamma}-1}|h|\\
     &\le C_h|x|^{\frac{2s}{1-\gamma}-1},
    \end{split}
\end{equation}
where $C_h>0$ is a constant, depending only on $h$ and $\frac{2s}{1-\gamma}$. Here, $\eta$ is a point on the line segment connecting $x$ to $x+h$, and $\eta'$ is a point on the line segment connecting the origin to $h$. Since $\gamma\in(0,1/3)$, then
$$
\gamma<\frac{1}{1+2s},
$$
which guarantees that the right-hand side of \eqref{vkgrowth2} is in $L^1_s(\R^n)$. Furthermore, as $u_k$ solves \eqref{equation}, a direct calculation reveals that
\begin{equation}\label{becka}
\begin{split}
    &-(-\Delta)^s\left(v_k(x+h)-v_k(x)\right)\\
    &=\frac{1}{\theta_k(r_k)r_k^{\frac{2s\gamma}{1-\gamma}}}\left[\left(u_k(x+h)\right)_+^\gamma-\left(u_k(x)\right)_+^\gamma-\left(u_k(x+h)\right)_-^\gamma+\left(u_k(x)\right)_-^\gamma\right].
\end{split}
\end{equation}
We can then pass to the limit, as $k\to\infty$, in \eqref{becka} (see \cite[Lemma 5]{CS11}, and also \cite[Lemma 4.5 and Corollary 4.7]{CS09}) and arrive at
\begin{equation*}\label{limit equation}
    (-\Delta)^s(v_0(x+h)-v_0(x))=0\,\,\mbox{ in }\,\,\R^n.
\end{equation*}
Note that since $0$ is a branching point for $u_k$, \eqref{definitionvk} implies that it is a branching point for $v_0$. This fact, combined with \Cref{SeminormGrowth} applied to $v_0$, with 
    $$
    \nu=2, \quad \beta:=\frac{2s}{1-\gamma}-2<1 \quad \text{and} \quad \alpha:=2s+\gamma-2<1,
    $$ 
    implies $v_0\equiv0$, which contradicts \eqref{limitseminormestimate2}.
\end{proof}

\begin{corollary}\label{c4.1}
     Under the conditions of \Cref{main result 1}, there exists a constant $C>0$, depending only on $n$ and $\gamma$, such that for any $r<1/2$ one has
    $$
     \sup_{B_r(x_0)}|u|\leq  C\|u\|_{L^\infty(\R^n)}r^{\frac{2s}{1-\gamma}}.
    $$   
\end{corollary}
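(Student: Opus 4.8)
The plan is to deduce the corollary directly from the pointwise estimate \eqref{growth estimate 2025} of \Cref{main result 1}. That theorem already gives, for every $x \in B_{1/2}(x_0)$,
$$
|u(x)| \le C\|u\|_{L^\infty(\R^n)}|x-x_0|^{\frac{2s}{1-\gamma}},
$$
with $C>0$ depending only on $n$ and $\gamma$. Given $r<1/2$, I would simply restrict attention to $x \in B_r(x_0)$, so that $|x-x_0| \le r$. Since the exponent $\frac{2s}{1-\gamma}$ is positive, the function $t \mapsto t^{\frac{2s}{1-\gamma}}$ is increasing on $[0,\infty)$, hence $|x-x_0|^{\frac{2s}{1-\gamma}} \le r^{\frac{2s}{1-\gamma}}$ for all such $x$. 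Combining these two facts yields
$$
|u(x)| \le C\|u\|_{L^\infty(\R^n)} r^{\frac{2s}{1-\gamma}}, \quad \forall x \in B_r(x_0),
$$
and taking the supremum over $x \in B_r(x_0)$ on the left-hand side gives exactly the claimed bound, with the same constant $C$.

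There is no real obstacle here: the corollary is a routine repackaging of \Cref{main result 1}, trading a pointwise decay estimate at the branching point $x_0$ for a supremum bound on concentric balls. The only minor point worth noting is that one should check $B_r(x_0) \subset B_{1/2}(x_0)$, which holds since $r<1/2$, so that \eqref{growth estimate 2025} is applicable at every point of $B_r(x_0)$; and that the constant $C$ is unchanged because monotonicity of the power function introduces no new dependence. I would therefore present the proof in two or three lines, invoking \Cref{main result 1} and the monotonicity of $t \mapsto t^{\frac{2s}{1-\gamma}}$.
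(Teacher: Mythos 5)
Your proof is correct and is precisely the argument the paper has in mind: the paper states \Cref{c4.1} without proof, treating it as an immediate consequence of the pointwise estimate \eqref{growth estimate 2025} together with monotonicity of $t \mapsto t^{\frac{2s}{1-\gamma}}$, exactly as you describe.
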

\begin{remark}
Note that if $s\in(1/2,1-\gamma)$, with $\gamma\in(0,1/3)$, and $u\in L^\infty(\R^n)$ is a viscosity solution of \eqref{equation}, then the conclusion of \Cref{main result 1} still holds, provided $x_0\in B_{1/2}$ is such that $u(x_0)=|Du(x_0)|=0$. The proof is similar to that of \Cref{main result 1}; at the end, to get a contradiction, one needs to apply \Cref{SeminormGrowth} to $v_0$ with
$$
\nu=1,\quad \beta:=\frac{2s}{1-\gamma}-1<1\quad \text{and}\quad \alpha:=2s+\gamma-1<1.
$$
\end{remark}

\begin{remark}
    The sharp regularity exponent provided by Theorem \ref{main result 1} plays a crucial role in understanding the behaviour of the solution in the smallness regime, which, in turn, enables the analysis of the regularity of the free boundary near branching points (see, for example, \cite{ATU24}).
\end{remark}

\medskip

\section{Consequences and beyond}\label{s5}

In this section, we prove three consequences of our main result. We start by observing that the approach to prove \Cref{main result 1} introduces a new strategy for studying the regularity of solutions for two-phase problems in the \textit{local} framework. 

\begin{theorem}\label{local two-phase problem}
    Let $u$ be a viscosity solution of 
    \begin{equation}\label{local equation}
    \Delta u=u_+^\gamma-u_-^\gamma\quad \textrm{in}\ B_1,
    \end{equation}
    where $\gamma\in(0,1/3)$. If $x_0\in B_{1/2}$ is a branching point for $u$, then there exists a constant $C>0$, depending only on $n$ and $\gamma$, such that
    $$
    |u(x)|\le C\|u\|_{L^\infty(\R^n)}|x-x_0|^{\frac{2}{1-\gamma}}, \quad \forall x \in B_{1/2}(x_0).
    $$
    As a consequence, $u\in C^{\frac{2}{1-\gamma}}$ at branching points.    
\end{theorem}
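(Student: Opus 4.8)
The plan is to obtain the local statement as a limit of the nonlocal estimates in \Cref{main result 1}, exploiting the fact that the constant there depends only on $n$ and $\gamma$ and is therefore uniform in $s$ as $s\nearrow 1^-$. First I would fix $\gamma\in(0,1/3)$ and note that, since $1-\gamma/2<1$, the hypothesis $s>1-\gamma/2$ of \Cref{main result 1} is satisfied for all $s$ sufficiently close to $1$; pick a sequence $s_j\nearrow 1^-$ in that range. Given a viscosity solution $u$ of \eqref{local equation} with branching point $x_0\in B_{1/2}$, the idea is to produce viscosity solutions $u_j$ of the nonlocal problem \eqref{equation} with $s=s_j$ that converge to $u$ locally uniformly and for which $x_0$ is (asymptotically) a branching point, then pass to the limit in \Cref{c4.1}.

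The construction of the $u_j$ is the natural one: solve the Dirichlet problem for $-(-\Delta)^{s_j}w = w_+^\gamma - w_-^\gamma$ in $B_1$ with exterior datum equal to $u$ on $\R^n\setminus B_1$ (extended, if necessary, by a fixed bounded extension of $u|_{\overline{B_1}}$); existence follows from the Perron method discussed after \Cref{comparison}, and $\|u_j\|_{L^\infty(\R^n)}\le \|u\|_{L^\infty(B_1)}$ by comparison with constants. By \Cref{CNtheorem} the $u_j$ are uniformly bounded in $C^{1+\sigma}(B_{3/4})$ for, say, $\sigma=1/2$ once $s_j>3/4$, so along a subsequence $u_j\to u_\infty$ in $C^1_{\loc}(B_{3/4})$ with $u_\infty=u$ on $\partial B_1$; the standard stability of viscosity solutions under $s_j\nearrow 1$ for the fractional Laplacian (the operators $-(-\Delta)^{s_j}$ converge to $-\Delta$ in the appropriate viscosity sense) together with uniqueness for \eqref{local equation} forces $u_\infty=u$ in $B_1$. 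Hence $u_j\to u$ in $C^1_{\loc}$, so $u_j(x_0)\to 0$ and $Du_j(x_0)\to 0$, and the uniform $C^{2s_j+\gamma}$ bound from the proof of \Cref{main result 1} gives $D^2u_j(x_0)\to D^2u(x_0)=0$ as well. Thus, for $j$ large, $x_0$ is essentially a branching point for $u_j$; applying \Cref{c4.1} (or \eqref{growth estimate 2025}) to each $u_j$ yields
$$
\sup_{B_r(x_0)}|u_j|\le C\|u\|_{L^\infty(\R^n)}\, r^{\frac{2s_j}{1-\gamma}},\qquad r<1/2,
$$
with $C=C(n,\gamma)$ independent of $j$, and letting $j\to\infty$ (using $2s_j/(1-\gamma)\to 2/(1-\gamma)$ and local uniform convergence) gives the claimed bound $\sup_{B_r(x_0)}|u|\le C\|u\|_{L^\infty}r^{2/(1-\gamma)}$, whence $u\in C^{2/(1-\gamma)}$ at $x_0$.

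The main obstacle is the stability/passage-to-the-limit step: one must be careful that $x_0$ is not exactly a branching point of $u_j$, only approximately, so strictly speaking one should either (i) re-run the contradiction argument of \Cref{main result 1} directly on the sequence $u_j$ with $s=s_j$, absorbing the small errors $u_j(x_0),Du_j(x_0),D^2u_j(x_0)=o(1)$ into the normalization $\theta_k$, which is clean because the whole scheme there already works with a vanishing sequence; or (ii) invoke a version of \Cref{main result 1} allowing $x_0$ to be a branching point only in the limit. Either route is routine given the uniform-in-$s$ estimates, but it is the point that needs the most care. A secondary technical point is justifying $u_\infty=u$, i.e. the convergence $-(-\Delta)^{s_j}\to-\Delta$ on test functions and the comparison principle for \eqref{local equation}; both are standard (the former is a classical computation, the latter follows as in \Cref{comparison} with $s=1$), so I would cite them rather than prove them.
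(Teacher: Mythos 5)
Your strategy matches the paper's exactly: both pass to the limit $s \nearrow 1^-$ in the nonlocal estimate of \Cref{main result 1}, exploiting the fact that the constant there is independent of $s$. The paper's proof consists of a single sentence invoking $(-\Delta)^s u \to -\Delta u$ and ``passing to the limit in \eqref{growth estimate 2025}''; it neither constructs an approximating family of nonlocal solutions nor explains how the branching-point hypothesis carries across the limit. You do both: the Dirichlet construction, the uniform $L^\infty$ bound by comparison with constants, the uniform $C^{1+\sigma}$ bounds from \Cref{CNtheorem}, the identification of the limit via stability of viscosity solutions and uniqueness for \eqref{local equation} (which holds since $t\mapsto t_+^\gamma-t_-^\gamma$ is nondecreasing). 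You also correctly flag the real subtlety: $x_0$ is a branching point of the limit $u$, but only an approximate one of each approximant $u_j$, so \Cref{main result 1} cannot be applied to the $u_j$ verbatim.

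I would push back on calling that last step routine, however. Re-running the contradiction argument of \Cref{main result 1} on the $u_j$ with $u_j(x_0),\,Du_j(x_0),\,D^2u_j(x_0)=o(1)$ breaks \Cref{caracterizacao de c1alpha}, which requires exact vanishing; when the witness $x_k$ lies close to the origin the $o(1)$ errors can dominate $k|x_k|^{2s/(1-\gamma)}$, so the diagonalization over the indices $j$ and $k$ has to be coordinated rather than merely absorbed into $\theta_k$. Subtracting the second-order Taylor polynomial $P_j$ of $u_j$ at $x_0$ restores the branching point but destroys the equation, and $(-\Delta)^{s_j}P_j$ is not even globally defined without a truncation. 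A cleaner variant you might prefer keeps $u$ itself fixed --- so $x_0$ stays an exact branching point --- and regards $u$ as an approximate nonlocal solution with forcing $f_s:=-(-\Delta)^s u+\Delta u\to 0$ locally uniformly, requiring a version of \Cref{main result 1} with a small inhomogeneity. Any of these finishes the proof, but none is as immediate as the paper's one-liner suggests; your proposal has correctly located where the genuine work lies, which the paper's proof does not acknowledge.
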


\begin{proof}
Since 
$$
\lim_{s\nearrow 1}(-\Delta)^su=-\Delta u
$$
(see, for example, \cite[Lemma 4.1]{T24}), using \Cref{main result 1} and passing to the limit in \eqref{growth estimate 2025} as $s\nearrow 1$, we conclude the desired result.
\end{proof}

\begin{remark}
    Since all points in $\{x_n=0\}$ are branching points for
\[
u(x):=\left\{
\begin{array}{rcl}
x_n^{\frac{2}{1-\gamma}} & , & x_n\geq 0 \\
 -(-x_n)^{\frac{2}{1-\gamma}}& , & x_n<0,
\end{array}
\right.
\]
the improved regularity result of \Cref{local two-phase problem} is optimal.
\end{remark}

\begin{proposition}\label{one-phase branching points}
    If $u$ is a viscosity solution of 
    \begin{equation}\label{one-phase local}
        \Delta u=u_+^\gamma \quad \textrm{in} \ B_1,
    \end{equation}
    then all the points on $\partial\{u>0\}$ are branching points for $u$.
\end{proposition}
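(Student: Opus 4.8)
The plan is to show that if $x_0\in\partial\{u>0\}$ then $u(x_0)=|Du(x_0)|=|D^2u(x_0)|=0$. The vanishing of $u$ at $x_0$ is immediate from continuity, since $x_0$ lies on the boundary of the positivity set, hence is a limit of points where $u=0$ (in the complement of $\{u>0\}$), and $u$ is continuous. So the work is in showing the first and second derivatives vanish there. The natural idea is that at a free boundary point the solution has a one-sided obstacle-type behavior: on one side $u\le 0$, and where $u\le 0$ equation \eqref{one-phase local} forces $\Delta u = 0$, so $u$ is harmonic on the (open) negativity/zero region near $x_0$; meanwhile on the positivity side $u>0$ and $\Delta u = u_+^\gamma = u^\gamma \ge 0$, so $u$ is subharmonic there with a bounded, indeed small, right-hand side near $x_0$. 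Thus $x_0$ is a local minimum-type point for $u$ in a suitable sense, and one expects $u$ to detach from zero slowly.

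First I would invoke the interior regularity: by \Cref{CNtheorem} and \Cref{Schauder} (or their classical local analogues, which is what genuinely applies here since $s=1$; alternatively view \eqref{one-phase local} as the limit $s\nearrow1$ of \eqref{equation} and quote the uniform estimates), solutions of \eqref{one-phase local} are $C^{1,\sigma}_{\loc}$, so $Du(x_0)$ is well-defined. To see $Du(x_0)=0$: since $u\ge 0$ near $x_0$ fails in general, but $u(x_0)=0$ and on the set $\{u\le 0\}$ (which touches $x_0$) we have $u\le 0 = u(x_0)$, the directional derivative of $u$ at $x_0$ in any direction pointing into a sequence of points of $\{u\le0\}$ is $\le 0$; combined with the $C^1$ regularity and the fact that $x_0$ is approached by the zero set from at least a cone-like or at least sequential direction, one gets $Du(x_0)\cdot e \le 0$ along that direction. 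On the other hand $u\ge 0$ on $\{u\ge 0\}$ with $u(x_0)=0$ forces $Du(x_0)\cdot e\ge 0$ in directions pointing into $\overline{\{u>0\}}$. The cleanest route is: $u(x_0)=0$ is a global minimum of $u$ over the closed set $\{u\le 0\}\ni x_0$ — no wait, more robustly, since $u$ is $C^1$ and $x_0$ is a point where $u=0$ sitting on the boundary, consider that $u_-$ attains a minimum (namely $0$) at $x_0$ and $u_+$ attains a minimum ($0$) at $x_0$; if $Du(x_0)\ne0$ then along the line through $x_0$ in direction $Du(x_0)$ the function $u$ is strictly increasing to first order, so $u<0$ on one side and $u>0$ on the other in a full neighborhood along that line, which would make $x_0$ interior to neither phase in a way that is consistent — so this alone is not a contradiction. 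The actual argument must use that on $\{u<0\}$ one has $\Delta u=0$: I would argue by a blow-up/scaling exactly as in \Cref{main result 1}. Indeed $x_0\in\partial\{u>0\}$ with $u(x_0)=0$; if $|Du(x_0)|>0$ the blow-ups $u(x_0+rx)/r$ converge to the linear function $L(x)=Du(x_0)\cdot x$, which changes sign, but since $\Delta u = u_+^\gamma$ with $u_+^\gamma \le \|u\|_\infty^\gamma$ bounded and the scaling is $r^1$ against a right-hand side scaling like $r^{2}\cdot(\text{something bounded})$, the limit is harmonic; a nonzero linear limit is fine, so to get the contradiction one instead uses that on $\{u\le0\}$, $u$ solves $\Delta u=0$ and $u\le0$, $u(x_0)=0$, so by Hopf's lemma applied in a ball internally tangent to $\{u\le0\}$ at $x_0$ (if such a ball exists) the normal derivative is strictly negative — but we want it zero.

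Given these subtleties, the argument I would actually commit to paper is the scaling contradiction in the style of the paper: suppose $x_0$ is a free boundary point that is \emph{not} branching, i.e. $|Du(x_0)|+|D^2u(x_0)|>0$. Normalize $x_0=0$, $\|u\|_{L^\infty}=1$. On $\{u>0\}$ near $0$ we have $0<u\le C|x|^{?}$ and $\Delta u = u^\gamma$; I would run the same $\theta_k$-type nonlinear scaling used in \Cref{main result 1} (with $s=1$, exponent $\frac{2}{1-\gamma}$) but now \emph{without} the branching hypothesis: the scaled limits $v_0$ satisfy $\Delta(v_0(\cdot+h)-v_0(\cdot))=0$ with the seminorm growth \eqref{v0estimate}, hence by \Cref{SeminormGrowth} $v_0$ is a polynomial of degree $\nu$. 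Then one checks that a nonzero degree-$\le 2$ polynomial arising as such a blow-up of a solution of \eqref{one-phase local} at a free-boundary point is impossible: if it had a nonzero linear part $\ell\cdot x$, then near $0$, $u(x)= \ell\cdot x + o(|x|)$, so $u<0$ on an open half-space portion, where $\Delta u=0$; but the positivity set $\{u>0\}$ also contains an open half-space portion with $\Delta u=u^\gamma$, and matching these with $u\in C^{1,\sigma}$ and $u(0)=0$, $Du(0)=\ell$, is consistent only if... — here the real obstruction, and the main obstacle of the whole proof, is ruling out that $\{u>0\}$ has $0$ as a ``regular'' boundary point with nonvanishing gradient. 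The resolution: on $\{u>0\}$, $u>0$ and $\Delta u=u^\gamma>0$, so $u$ is a positive strict subsolution; near a point where $u(0)=0$, $Du(0)=\ell\ne0$, the level set $\{u=0\}$ is a $C^{1,\sigma}$ hypersurface through $0$ with normal $\ell/|\ell|$, and on the side where $u\le0$, $u$ is harmonic with $u(0)=0$ being its maximum on that side along the boundary — apply Hopf's lemma to get $\partial_\nu u(0)<0$ strictly (inward normal into $\{u<0\}$), i.e. $\ell\ne0$ is consistent, \emph{but} then on the $\{u>0\}$ side, $u$ grows linearly, $u\sim |\ell|\,d(x,\{u=0\})$, and plugging into $\Delta u=u^\gamma$: near the flat boundary $\Delta u \to \Delta(|\ell|\,x_n)=0$ while $u^\gamma\sim(|\ell|x_n)^\gamma$, and $\int$ of $u^\gamma$ over a small half-ball is order $\rho^{n+\gamma}$ while $\int \Delta u$ over the same is $\int_{\partial}\partial_\nu u \sim \rho^{n-1}\cdot O(1) - $ contribution from the flat part which is $O(\rho^{n-1+\sigma})$; balancing forces... this is getting delicate, so the honest statement is that \textbf{the main obstacle is the analysis at candidate ``regular'' free-boundary points}, and the clean way around it is: on $\{u>0\}$ we have $\Delta u = u^\gamma \ge 0$ and $u>0$, so by the strong maximum principle $u$ cannot attain an interior minimum; at $x_0\in\partial\{u>0\}$, $u(x_0)=0$ is the minimum, so $x_0$ is \emph{not} interior to $\{u>0\}$ (true by hypothesis) and the classical theory of the obstacle-type problem $\Delta u = u_+^\gamma$ (the Alt--Phillips problem, \cite{AP86, P1, P2, WY22}, \cite{T16}) gives that at every free boundary point $u$ vanishes to order at least $\frac{2}{1-\gamma}>2$, whence $u(x_0)=|Du(x_0)|=|D^2u(x_0)|=0$ because $\frac{2}{1-\gamma}>2$ for $\gamma\in(0,1/3)$ (indeed for all $\gamma\in(0,1)$). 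I would therefore cite the one-phase growth estimate (from \cite{T16}, which applies to nonnegative solutions — and $u$ is nonnegative on a neighborhood of $x_0$ \emph{relative to its positivity component}, so one localizes to the component $\Omega=\{u>0\}$ where $u$ solves $\Delta u=u^\gamma$, $u>0$, $u=0$ on $\partial\Omega$, and the known $C^{\frac{2}{1-\gamma}}$ growth at $\partial\Omega$ applies), conclude $|u(x)|\le C|x-x_0|^{\frac{2}{1-\gamma}}$ near $x_0$, and since $\frac{2}{1-\gamma}>2$, Taylor expansion of the $C^{1,\sigma}$ (indeed $C^2$, by \Cref{Schauder} away from, and bootstrapping up to, the free boundary from the $u\le0$ harmonic side and matching) function $u$ forces all derivatives up to order two to vanish at $x_0$, i.e. $x_0$ is a branching point in the sense of \Cref{branching points}.
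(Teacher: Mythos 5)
Your final committed argument is correct in outline but takes a much heavier route than the paper, and it is circular in the context the proposition is meant to serve. The paper's proof is three lines of elementary calculus: since $u\ge0$ (implicit in the one-phase setting) and $u(x_0)=0$, the point $x_0$ is a local minimum, hence $Du(x_0)=0$ and $D^2u(x_0)$ is positive semi-definite; then the equation gives $\operatorname{trace}(D^2u(x_0))=\Delta u(x_0)=u_+^\gamma(x_0)=0$, and a positive semi-definite matrix with vanishing trace is the zero matrix, so $D^2u(x_0)=0$. You circle very close to this observation (you note the strong maximum principle, that $x_0$ is a minimum, that $\Delta u(x_0)=0$) but never combine the three facts into the trace argument; instead you invoke the $C^{\frac{2}{1-\gamma}}$ growth estimate from \cite{T16} and Taylor-expand. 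That route is valid as a matter of logic, but it defeats the purpose of \Cref{one-phase branching points}: in \Cref{remark on one-phase problem} this proposition is paired with \Cref{local two-phase problem} precisely to \emph{recover and generalize} the result of \cite{T16}, so citing \cite{T16} inside the proof makes the package self-referential rather than an independent route. It also imports an entire free-boundary regularity theorem to prove something that only needs the second-derivative test.

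A few further remarks. Much of the proposal is taken up by acknowledged dead ends (directional derivatives, blow-ups, Hopf's lemma) which you then abandon; the blow-up and Hopf digressions are not salvageable as written because, as you yourself note, a nonzero linear blow-up limit is harmonic and does not by itself produce a contradiction, and Hopf's lemma on the $\{u\le0\}$ side would yield a strictly negative normal derivative, the opposite of what you want. The hedging about whether $u$ is $C^2$ at the free boundary is unnecessary: since $u$ is bounded and continuous, $u_+^\gamma\in C^\gamma$, so Schauder gives $u\in C^{2,\gamma}_{\loc}(B_1)$ directly, free boundary included. Finally, note that both your argument and the paper's rely on $u\ge0$ near $x_0$ (so that $x_0$ is a minimum, or so that \cite{T16} applies); this is implicit in the one-phase setting but worth stating, since the equation $\Delta u=u_+^\gamma$ alone does not force a sign.
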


\begin{proof}
    Indeed, if $x_0\in\partial\{u>0\}$, then $u(x_0)=|Du(x_0)|=0$, where the last equality follows from the fact that $u$ takes its minimum at $x_0$. For the same reason, $D^2u(x_0)$ is non-negative definite, \textit{i.e.}, all the eigenvalues of $D^2u(x_0)$ are non-negative. On the other hand, if $\lambda_i\ge0$ are the eigenvalues of $D^2u(x_0)$, one has
    $$    0=u_+^\gamma(x_0)=\Delta u(x_0)=\textrm{trace}(D^2u(x_0))=\sum_{i=1}^n\lambda_i,
    $$
    therefore, $\lambda_i$=0, for all $i=1,2,\ldots,n$. Hence, $D^2u(x_0)=0$.
\end{proof}

\begin{remark}\label{remark on one-phase problem}
Solutions of \eqref{one-phase local} are $C^{\frac{2}{1-\gamma}}$ at the free boundary $\partial\{u>0\}$ (see \cite[Theorem 2]{T16}). In view of \Cref{one-phase branching points}, \Cref{local two-phase problem} generalizes this result to the two-phase case. Note, however, that while for \eqref{one-phase local} the range of $\gamma$ is $(0,1)$, for \eqref{local two-phase problem} $\gamma$ ranges in $(0,1/3)$.
\end{remark}

We now prove the second consequence of our main result.

\begin{theorem}
Under the conditions of \Cref{main result 1}, there exists $C>0$, depending only on $n$ and $\gamma$, such that
\begin{equation}\label{gradientestimate}
    \sup_{B_r(x_0)}|Du|\le C\|u\|_{L^\infty(\R^n)}r^{\frac{2s}{1-\gamma}-1},
\end{equation}
for $r<1/2$, where $x_0\in B_{1/2}$ is a branching point.
\end{theorem}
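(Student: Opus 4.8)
The plan is to combine the growth estimate of \Cref{c4.1} with an interior gradient estimate for a suitably rescaled version of \eqref{equation}; the only subtle point is the nonlocal tail. Exactly as in the proof of \Cref{main result 1}, we may assume $x_0=0$ and $\|u\|_{L^\infty(\R^n)}=1$, so that \Cref{c4.1} (together with the trivial bound $|u|\le1$ away from the origin) gives the global inequality $|u(z)|\le C|z|^{\frac{2s}{1-\gamma}}$ for all $z\in\R^n$, meaningful for $|z|\lesssim1$. Fix $x$ with $d:=|x|$ small and set
$$
v(y):=\frac{u(x+dy)}{d^{\frac{2s}{1-\gamma}}},\qquad y\in\R^n.
$$
Since $\frac{2s}{1-\gamma}$ is precisely the exponent for which \eqref{equation} is invariant under this scaling (a direct computation, as in the proof of \Cref{main result 1}), the function $v$ is again a viscosity solution of \eqref{equation} in $B_1$; moreover $\|v\|_{L^\infty(B_1)}\le C$ and, crucially, $|v(y)|\le C(1+|y|)^{\frac{2s}{1-\gamma}}$ for every $y\in\R^n$ (here $|x|=d$ is used exactly). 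As $Du(x)=d^{\frac{2s}{1-\gamma}-1}Dv(0)$, it suffices to prove $|Dv(0)|\le C$; the asserted estimate then follows by taking the supremum over $B_r(x_0)$, using $\frac{2s}{1-\gamma}-1>0$.

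To bound $|Dv(0)|$, decompose $v=W+H$, where $W$ solves the Dirichlet problem $-(-\Delta)^sW=v_+^\gamma-v_-^\gamma$ in $B_1$ with $W\equiv0$ in $\R^n\setminus B_1$, obtained by Perron's method and the comparison principle. Comparison with the fractional torsion function of $B_1$ (an explicit multiple of $(1-|y|^2)_+^s$, hence bounded) gives $\|W\|_{L^\infty(\R^n)}\le C$, and since $s>1/2$, \Cref{CNtheorem} yields $\|W\|_{C^{1+\sigma}(B_{1/2})}\le C$, in particular $|DW(0)|\le C$. The remainder $H:=v-W$ is $s$-harmonic in $B_1$, is bounded there by $C$, and coincides with $v$ outside $B_1$, so $|H(z)|\le C(1+|z|)^{\frac{2s}{1-\gamma}}$. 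This is where the obstruction lies: because $\frac{2s}{1-\gamma}>2s$, the function $H$ is not in $L^1_s$ with a bound independent of $d$, so \Cref{regularity} cannot be applied to it uniformly in $d$. The remedy is to invoke the Poisson representation for $s$-harmonic functions in a ball,
$$
H(y)=\int_{\R^n\setminus B_1}P_1(y,z)\,H(z)\dd z,\qquad y\in B_1,
$$
and differentiate it at the \emph{center} $y=0$: one has $|D_yP_1(0,z)|\le C\,(|z|^2-1)^{-s}\,|z|^{-n-1}$, which decays one power of $|z|$ faster than $P_1(0,z)$ itself — the gain reflecting that $\nabla_y(1-|y|^2)^s$ vanishes at $y=0$ — so that
$$
|DH(0)|\le C\int_{\R^n\setminus B_1}\frac{(1+|z|)^{\frac{2s}{1-\gamma}}}{(|z|^2-1)^{s}\,|z|^{n+1}}\dd z.
$$
Near $|z|=1$ this integral converges because $s<1$ and $H$ is bounded there; near infinity the integrand is comparable to $|z|^{\frac{2s}{1-\gamma}-2s-n-1}$, which is integrable precisely when $\frac{2s}{1-\gamma}<2s+1$, i.e. $\gamma<\frac1{1+2s}$ — a condition automatically met for $\gamma\in(0,1/3)$, exactly as in \Cref{main result 1}. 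Hence $|DH(0)|\le C$, and therefore $|Dv(0)|\le|DW(0)|+|DH(0)|\le C$.

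The crux, and the main obstacle, is this nonlocal tail: the blow-up $v$ grows faster than $|y|^{2s}$ at infinity, so it cannot be fed into the interior estimates of \Cref{regularity} or \Cref{CNtheorem} with a constant independent of the scale $d$; splitting off the $s$-harmonic part $H$ and exploiting the extra decay of $D_yP_1$ at the center of the ball is what circumvents this, and it is also the step that forces the restriction $\gamma\in(0,1/3)$ (via $\gamma<\frac1{1+2s}$), mirroring the argument in \Cref{main result 1}. The complementary range, where $|x-x_0|$ is bounded away from $0$, is routine: there the relevant balls lie in the interior of $B_1$ and the bound follows directly from \Cref{CNtheorem}, the positivity of $\frac{2s}{1-\gamma}-1$ ensuring it fits the stated power of $r$.
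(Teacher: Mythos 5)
Your proof is correct, and it takes a genuinely different route from the paper's. The paper proceeds by contradiction: it sets $\mu_k := \sup_{B_{2^{-k}}}\lvert Du\rvert$, uses a dyadic iteration to reduce to the case $\mu_{k+1}\ge 2^{-(\frac{2s}{1-\gamma}-1)}\mu_k$, normalizes $v_k(x)=u(2^{-k}x)/(2^{-k}\mu_{k+1})$, extracts a convergent subsequence via the $C^{1+\sigma}$ interior estimates of \Cref{CNtheorem}, and shows the limit $v_0$ is $s$-harmonic with controlled seminorm growth, whence \Cref{SeminormGrowth} forces $v_0\equiv0$, contradicting $\sup_{B_{1/2}}\lvert Dv_0\rvert=1$. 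You instead give a direct, point-by-point argument: rescale at each $x$ by $d=\lvert x\rvert$ and the invariant exponent $\frac{2s}{1-\gamma}$, feed in the global growth $\lvert u(z)\rvert\le C\lvert z\rvert^{\frac{2s}{1-\gamma}}$ from \Cref{c4.1}, and then split $v=W+H$ with $W$ solving the linear inhomogeneous Dirichlet problem (bounded by the fractional torsion barrier, hence $C^{1+\sigma}$ by \Cref{CNtheorem}) and $H$ the $s$-harmonic reflection handled through the explicit Poisson kernel. The key observation that $\lvert D_yP_1(0,z)\rvert$ decays one power of $\lvert z\rvert$ faster than $P_1(0,z)$ — because $\nabla_y(1-\lvert y\rvert^2)^s$ vanishes at the center — is exactly what compensates for $\frac{2s}{1-\gamma}>2s$, and the resulting integrability condition $\gamma<\frac{1}{1+2s}$ matches the one used in the proof of \Cref{main result 1} via \eqref{vkgrowth2}.

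What each approach buys: yours is more elementary and quantitative (no compactness, no contradiction, no Liouville theorem, and it isolates cleanly where $\gamma<1/3$ enters), but it leans on the explicit fractional Poisson kernel for $(-\Delta)^s$ in a ball, so it does not carry over to the fully nonlinear nonlocal operators of \Cref{s6}. The paper's compactness/Liouville argument is less explicit but is exactly the scheme that the rest of the paper reuses, and is the one amenable to the $F(D^{2s}u)$ generalization. One small point worth making explicit in your write-up: the differentiation of the Poisson representation under the integral sign is justified because $H$ is bounded for each fixed $d$ (trivially, $\lvert v\rvert\le d^{-\frac{2s}{1-\gamma}}$), and only the resulting value at $y=0$ needs to be bounded uniformly in $d$; a naive domination argument valid for $y$ ranging over a neighborhood of $0$ would not give a $d$-uniform majorant, precisely because away from the center $D_yP_1$ lacks the extra decay.
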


\begin{proof}    
    Without loss of generality, let $x_0=0$ and $\|u\|_{L^\infty(\R^n)}=1$. Set 
    $$
    \beta:=\frac{2s}{1-\gamma}-1.
    $$
    We argue by contradiction, assuming \eqref{gradientestimate} fails. Then, for each $k\in\mathbb{N}$, there exists $u_k$, a normalized solution of \eqref{equation}, such that, for some $r<1/2$, one has
    $$
    \sup_{B_{r}}|Du_k|>k\,r^\beta.
    $$    
    Let $k_r\in\mathbb{N}$ be such that
    $$
    2^{-(k_r+1)}\le r<2^{-k_r}.
    $$
    Then
    $$
    \sup_{B_{2^{-k_r}}}|Du_k|\ge\sup_{B_r}|Du_k|>k\,r^\beta\ge k2^{-(k_r+1)\beta},
    $$
    \textit{i.e.},    
    \begin{equation}\label{contradictoryassumption}
        \mu_{k_r}>k2^{-(k_r+1)\beta}, 
    \end{equation}
    where
    $$
    \mu_{k_r}:=\sup_{B_{2^{-k_r}}}|Du_k|.
    $$    
    By \Cref{c4.1}, there exists a constant $C>0$, depending only on $n$ and $\gamma$, such that  
    \begin{equation}\label{supestimate}      
    \sup_{B_1}|u_k(2^{-k_r} x)|=\sup_{B_{2^{-k_r}}}|u_k(x)|\le C2^{-k_r(\beta+1)}.
    \end{equation}   
    Set      
    $$
    v_k(x):=\frac{u_k(2^{-k_r}x)}{2^{-k_r}\mu_{k_r}}, \quad x\in B_{1}.
    $$
    Employing \eqref{supestimate} and \eqref{contradictoryassumption}, we estimate
    \begin{equation}\label{vk bound}
    |v_k(x)|\le C\frac{2^{-k_r(\beta+1)}}{2^{-k_r}k2^{-(k_r+1)\beta}}=C\frac{2^\beta}{k}.
    \end{equation}    
    Note that
    \begin{equation}\label{sup of grad vk}      \sup_{B_{1}}|Dv_k|=\sup_{B_{2^{-k_r}}}\frac{|Du_k(x)|}{\mu_{k_r}}=1.
    \end{equation}    
   Furthermore, from \eqref{contradictoryassumption} and \eqref{vk bound}, we get
   \begin{align}\label{fractional Laplacian estimate}
    |(-\Delta)^{s}v_k(x)|&=\mu_{k_r}^{-1}2^{(1-2s)k_r}|(-\Delta)^s u_k(2^{-k_r}x)|\nonumber\\
    &=\mu_{k_r}^{-1}2^{(1-2s)k_r}\left|(u_k)_+^{\gamma}(2^{-k_r}x)-(u_k)_-^\gamma(2^{-k_r}x)\right|\nonumber\\
    &=\mu_{k_r}^{\gamma-1}2^{(1-2s-\gamma)k_r}\left|(v_k)_+^{\gamma}(x)-(v_k)_-^\gamma(x)\right|\nonumber\\
    &<\left[k2^{-(k_r+1)\beta}\right]^{\gamma-1}2^{(1-2s-\gamma)k_r}\left|(v_k)_+^{\gamma}(x)-(v_k)_-^\gamma(x)\right|\nonumber\\
    &=k^{\gamma-1}2^{2s+\gamma-1}\left|(v_k)_+^{\gamma}(x)-(v_k)_-^\gamma(x)\right|\nonumber\\
    &\le\frac{C^{\gamma}2^{2s+\beta\gamma+\gamma}}{k}.
    \end{align}    
    Additionally, for any $0<\alpha<\sigma<2s-1$ and $2^{k_r-1}\ge R\ge1$, recalling \eqref{contradictoryassumption} and \Cref{CNtheorem}, one has
    \begin{align}\label{vk seminorm bound}
        [v_k]_{C^{1+\alpha}(B_R)}&=\frac{2^{-k_r(1+\alpha)}}{\mu_{k_r}}[u_k]_{C^{1+\alpha}(B_{2^{-k_r}R})}\nonumber\\
        &\le\frac{2^{-k_r(1+\alpha)}}{\mu_{k_r}}(2^{-k_r+1}R)^{\sigma-\alpha}[u_k]_{C^{1+\sigma}(B_{2^{-k_r}R})}\nonumber\\   
        &<\frac{2^{-k_r(1+\alpha)+(-k_r+1)(\sigma-\alpha)}}{k2^{-(k_r+1)\beta}}R^{\sigma-\alpha}[u_k]_{C^{1+\sigma}(B_{2^{-k}R})}\nonumber\\
        &=\frac{2^{\sigma-\alpha+\beta}}{k}2^{k_r\left(\frac{2s}{1-\gamma}-2-\sigma\right)}R^{\sigma-\alpha}[u_k]_{C^{1+\sigma}(B_{2^{-k}R})}\nonumber\\
        &\le\tilde{C}R^{\sigma-\alpha},
    \end{align}
    for a constant $\tilde{C}>0$, depending only on $\gamma$ and $s$. Here, the last inequality follows by choosing $\sigma>0$ sufficiently close to $2s-1$ so
    $$
    \frac{2s}{1-\gamma}-2-\sigma<0.
    $$
    Thus, up to a subsequence, $v_k$ converges to some $v_0$ in $C^{1+\alpha}(B_R)$, as $k\to\infty$. Moreover, thanks to \eqref{fractional Laplacian estimate} and \eqref{vk seminorm bound},
    $$
    (-\Delta)^sv_0=0
    $$
    and
    $$
    [v_0]_{C^{1+\alpha}(B_R)}\le\tilde{C}R^{\sigma-\alpha}.
    $$
    \Cref{SeminormGrowth} applies to $v_0$, yielding $v_0\equiv0$, which is a contradiction, since from \eqref{sup of grad vk}, one has
    $$
    \sup_{B_{1}}|Dv_0|=1.
    $$
\end{proof}
We finish this section by proving a Liouville-type result for solutions of \eqref{equation} with a certain growth at infinity.

\begin{theorem}
    Let $u\in L^\infty(\R^n)$ be a viscosity solution of 
    \begin{equation*}\label{entiresolution}
   -(-\Delta)^su=u_+^\gamma-u_-^\gamma \quad \textrm{in} \ \R^n,
\end{equation*}    
    with $\gamma\in\left(0,1/3\right)$ and $s\in\left(1/2,1\right)$. If $x_0\in\partial\{u>0\}\cap\left\{|Du|=0\right\}$, and 
    \begin{equation}\label{5.0}
        u(x)=o\left(|x-x_0|^{\frac{2s}{1-\gamma}}\right),\quad \textrm{as}\quad |x|\rightarrow \infty
    \end{equation}
    then $u\equiv0$.
\end{theorem}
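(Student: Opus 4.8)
The plan is to normalize $x_0=0$, exploit the scale invariance of \eqref{equation} on $\mathbb{R}^n$, feed the sharp growth estimate of \Cref{main result 1} (in the form of \Cref{c4.1}) back in at every scale, and then let the scaling parameter diverge.

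First I would reduce to $x_0=0$, which is licit since $(-\Delta)^s$ and the right-hand side of \eqref{equation} are translation invariant. As $u$ is continuous (\Cref{CNtheorem}) and $0\in\partial\{u>0\}$, necessarily $u(0)=0$; together with $|Du(0)|=0$ this is the first two of the three conditions defining a branching point. (Note that \eqref{5.0} is automatic here: $u\in L^\infty(\mathbb{R}^n)$ and $\tfrac{2s}{1-\gamma}>1$.) The structural point is that \eqref{equation} on $\mathbb{R}^n$ is invariant under the rescaling
$$u\ \longmapsto\ u_\lambda(x):=\lambda^{-\frac{2s}{1-\gamma}}\,u(\lambda x),\qquad\lambda>0,$$
because the exponents balance, $2s-\tfrac{2s}{1-\gamma}+\tfrac{2s\gamma}{1-\gamma}=0$. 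Hence each $u_\lambda$ is again a bounded entire viscosity solution of \eqref{equation}, with $0\in\partial\{u_\lambda>0\}$, $u_\lambda(0)=|Du_\lambda(0)|=0$, and $\|u_\lambda\|_{L^\infty(\mathbb{R}^n)}=\lambda^{-\frac{2s}{1-\gamma}}\|u\|_{L^\infty(\mathbb{R}^n)}$; moreover the derivatives of $u_\lambda$ at $0$ are positive powers of $\lambda$ times those of $u$, so $0$ is a branching point of $u_\lambda$ precisely when it is one of $u$.

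Next I would apply the growth estimate scale by scale. If $s\in(1/2,1-\gamma)$, the estimate of \Cref{c4.1} holds for each $u_\lambda$ under the weaker hypothesis $u_\lambda(0)=|Du_\lambda(0)|=0$ by the remark following \Cref{c4.1}; if $s>1-\tfrac\gamma2$, one invokes \Cref{c4.1} for $u_\lambda$ at its branching point $0$. In either case, with $C=C(n,\gamma)$,
$$\sup_{B_r}|u_\lambda|\ \le\ C\,\|u_\lambda\|_{L^\infty(\mathbb{R}^n)}\,r^{\frac{2s}{1-\gamma}}\ =\ C\,\lambda^{-\frac{2s}{1-\gamma}}\,\|u\|_{L^\infty(\mathbb{R}^n)}\,r^{\frac{2s}{1-\gamma}},\qquad r<\tfrac12 .$$
Since $\sup_{B_r}|u_\lambda|=\lambda^{-\frac{2s}{1-\gamma}}\sup_{B_{\lambda r}}|u|$, the factor $\lambda^{-\frac{2s}{1-\gamma}}$ cancels and
$$\sup_{B_{\lambda r}}|u|\ \le\ C\,\|u\|_{L^\infty(\mathbb{R}^n)}\,r^{\frac{2s}{1-\gamma}}\qquad\text{for every }\lambda>0,\ r<\tfrac12 .$$
Fixing $R>0$ and choosing $\lambda>2R$, $r:=R/\lambda<\tfrac12$, this reads $\sup_{B_R}|u|\le C\|u\|_{L^\infty(\mathbb{R}^n)}(R/\lambda)^{\frac{2s}{1-\gamma}}$; letting $\lambda\to\infty$ forces $\sup_{B_R}|u|=0$, and since $R$ is arbitrary, $u\equiv0$.

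The step I expect to be the crux is making \Cref{c4.1} genuinely applicable to every $u_\lambda$, i.e. verifying that its hypotheses hold at $x_0$ — concretely, that $x_0$ is a bona fide branching point of $u$, and hence automatically of each $u_\lambda$. For $s\in(1/2,1-\gamma)$ this difficulty disappears, since the remark following \Cref{c4.1} requires only $u(x_0)=|Du(x_0)|=0$. For $s>1-\tfrac\gamma2$ one must in addition know that $|D^2u(x_0)|=0$; this is the delicate point, to be handled either by showing directly that, for an entire solution of \eqref{equation}, a point of $\partial\{u>0\}\cap\{|Du|=0\}$ must have vanishing Hessian, or by building the vanishing of $D^2u(x_0)$ into the scaling iteration itself. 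Once the scale-uniform estimate is secured, the remainder — the scale invariance of \eqref{equation}, the behavior of derivatives and of the $L^\infty$-norm under $u\mapsto u_\lambda$, and the passage $\lambda\to\infty$ — is routine bookkeeping.
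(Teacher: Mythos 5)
Your rescaling $u_\lambda(x)=\lambda^{-\frac{2s}{1-\gamma}}u(\lambda x)$ is exactly the paper's $v_R$, and the overall strategy---apply the growth estimate of \Cref{main result 1} (via \Cref{c4.1}) to each rescaling and send the scale to infinity---is precisely the paper's. Your bookkeeping is in fact cleaner: you cancel $\lambda^{-\frac{2s}{1-\gamma}}$ against $\|u_\lambda\|_{L^\infty(\R^n)}=\lambda^{-\frac{2s}{1-\gamma}}\|u\|_{L^\infty(\R^n)}$ directly and conclude from $\sup_{B_R}|u|\le C\|u\|_{L^\infty(\R^n)}(R/\lambda)^{\frac{2s}{1-\gamma}}\to 0$, whereas the paper argues a bit more circuitously through $\sup_{B_1}|v_R(x)|/|x|^{\frac{2s}{1-\gamma}}$ and the pointwise convergence $v_R\to 0$. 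Your observation that hypothesis \eqref{5.0} is automatic for bounded $u$ is correct and worth retaining: since $u\in L^\infty(\R^n)$ and $\frac{2s}{1-\gamma}>0$, the decay condition is redundant, which also explains why you never actually need it.

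The gap you flag at the end is genuine, and---importantly---it is present in the paper's proof as written. The paper invokes \Cref{main result 1} for $v_R$ after verifying only $v_R(0)=0$, but \Cref{main result 1} requires $s>1-\frac{\gamma}{2}$ \emph{and} that $0$ be a full branching point, including $|D^2v_R(0)|=0$; the Liouville theorem, by contrast, only assumes $u(x_0)=|Du(x_0)|=0$ and asserts the full range $s\in(1/2,1)$. For $s\in(1/2,1-\gamma)$ the remark following \Cref{c4.1} closes the gap, as you note. For $s>1-\frac{\gamma}{2}$ the vanishing of the Hessian at $x_0$ must either be added to the hypotheses or derived; unlike the one-phase local case of \Cref{one-phase branching points}, where $u\ge 0$ forces $D^2u(x_0)\ge 0$ and tracelessness then kills it, the sign-changing two-phase setting offers no such argument. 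And for the intermediate range $s\in[1-\gamma,1-\frac{\gamma}{2}]$ neither the theorem nor the remark is stated. So your proposal is as complete as the paper's own proof, and you have correctly isolated the point that needs repair.
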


\begin{proof}
    Without loss of generality, we may assume $x_0=0$. With $R>1$, set
    $$
    v_R(x):=\frac{u(Rx)}{R^\frac{2s}{1-\gamma}}.
    $$    
    Since $v_R(0)=0$, and $v_R$ is a viscosity solution of \eqref{equation}, \Cref{main result 1} gives
    \begin{equation}\label{5.3}
        |v_R(x)|\le C\|v_R\|_{L^\infty(\R^n)}|x|^{\frac{2s}{1-\gamma}}.
    \end{equation}    
    Observe that if $|Rx|$ is bounded, then $u(Rx)$ is bounded. Therefore, 
    \begin{equation}\label{vtozero}
        v_R \longrightarrow 0,\quad \textrm{as}\quad R\to\infty.
    \end{equation}
    In fact, \eqref{vtozero} remains true also when $|Rx|\to\infty$, as $R\to\infty$. Indeed, using \eqref{5.0}, for any fixed $x\neq0$, one gets
    $$
    v_R(x)=\frac{u(Rx)}{|Rx|^{\frac{2s}{1-\gamma}}}|x|^{\frac{2s}{1-\gamma}} \longrightarrow 0,\quad \textrm{as}\ R\to\infty.
    $$    
    We aim to show that $u\equiv0$. Suppose this is not the case, and there is a point $y\in\R^n$ such that $|u(y)|>0$. By choosing $R>0$ large enough so that $y\in B_R$, and using \eqref{5.3} and \eqref{vtozero}, we estimate
    $$
    \frac{|u(y)|}{|y|^\frac{2s}{1-\gamma}}\le\sup_{B_R}\frac{|u(x)|}{|x|^\frac{2s}{1-\gamma}}=\sup_{B_1}\frac{|v_R(x)|}{|x|^\frac{2s}{1-\gamma}}<\frac{|u(y)|}{2|y|^\frac{2s}{1-\gamma}},
    $$    
    reaching a contradiction.    
\end{proof}

\medskip

\section{Fully nonlinear case}\label{s6}

As observed earlier, our main result can be generalized to the fully nonlinear setting. More precisely, let 
\begin{equation}\label{equationfully}
F(D^{2s}u)=u_+^\gamma-u_-^\gamma\quad \textrm{in}\ B_1,
\end{equation}
where $D^{2s}u(x)$ is the matrix with $(i,j)-$entry
$$
\int_{\R^n}\delta u(x,y)\frac{\langle e_i,y\rangle \langle e_j,y\rangle}{|y|^{n+2s+2}}\,dy,
$$
where $\delta u(x,y):=u(x+y)+u(x-y)-2u(x)$ is the symmetric difference, and $\{e_i\}$ is the standard orthonormal basis of $\R^n$. In \eqref{equationfully}, the operator $F:\operatorname{Sym}(n)\rightarrow \R$ is assumed to be a uniformly elliptic operator that vanishes at the origin, \textit{i.e.}, $F(0)=0$, and 
\begin{equation}\label{ellipticity}
    \lambda\|N\|\leq F(M+N)-F(M)\leq\Lambda\|N\|,
\end{equation}
for some constants $0<\lambda\le\Lambda$, and for any $M,N\in \text{Sym}(n)$ with $N\geq 0$. Furthermore, we suppose that $F(M)$ is differentiable with respect to $M$, and     \begin{equation}\label{modulusofcontinuity}
    \|DF(M)-DF(N)\|\leq\omega(\|M-N\|),
\end{equation}
for a modulus of continuity $\omega$, and for all $M,N\in \operatorname{Sym}(n)$.
The following result is from \cite[Proposition 2.2]{HY} (see also \cite{ASS, PT}). It unlocks the proof of the corresponding improved regularity result for \eqref{equationfully}.

\begin{proposition}\label{convergence}
    If $F_k:\operatorname{Sym}(n)\to\R$ is a sequence of operators vanishing at the origin and satisfying \eqref{ellipticity}-\eqref{modulusofcontinuity} with the same ellipticity constants, then, up to a subsequence,
    $$
    \rho^{-1}F_k(\delta M) \longrightarrow DF_0(0)M,
    $$
    locally uniformly, as $\rho\to0$, where $F_0$ is an operator satisfying the same conditions.
\end{proposition}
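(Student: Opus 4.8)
The plan is to run an Arzel\`a--Ascoli compactness argument, carried out at the level of the derivatives $DF_k$ rather than the $F_k$ themselves, and then to read off the stated convergence from the first-order Taylor expansion of the limiting operator at the origin. The structural hypotheses \eqref{ellipticity} and \eqref{modulusofcontinuity} provide exactly the uniform-in-$k$ bounds and equicontinuity needed for the compactness, and all the relevant structure survives passage to the limit.

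First I would record the a priori bounds that are immediate from \eqref{ellipticity}. Decomposing an arbitrary $N\in\operatorname{Sym}(n)$ as $N=N^{+}-N^{-}$ with $N^{\pm}\ge 0$ and $\|N^{\pm}\|\le\|N\|$, \eqref{ellipticity} gives $|F_k(M)-F_k(M')|\le C\|M-M'\|$ with $C=C(n,\Lambda)$, and, since $F_k(0)=0$, also $|F_k(M)|\le C\|M\|$; thus $\{F_k\}$ is equi-Lipschitz and locally uniformly bounded. Differentiating \eqref{ellipticity} in a nonnegative direction at a point yields $\lambda\|N\|\le DF_k(M)[N]\le\Lambda\|N\|$ for $N\ge 0$, so the linear maps $DF_k(M)$ are uniformly bounded in $\mathcal{L}(\operatorname{Sym}(n))$, while \eqref{modulusofcontinuity} makes $\{DF_k\}$ equicontinuous with the common modulus $\omega$.

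Since $\operatorname{Sym}(n)$ is finite dimensional, Arzel\`a--Ascoli applied to $DF_k\colon\operatorname{Sym}(n)\to\mathcal{L}(\operatorname{Sym}(n))$ produces a subsequence along which $DF_k\to G$ locally uniformly, with $G$ continuous, $\|G(M)-G(M')\|\le\omega(\|M-M'\|)$, and $\lambda\|N\|\le G(M)[N]\le\Lambda\|N\|$ for $N\ge 0$. Passing to the limit in the identity
\[
F_k(M)=\int_{0}^{1}DF_k(tM)[M]\,dt,
\]
valid because $F_k\in C^1$ and $F_k(0)=0$, the right-hand side converges locally uniformly to $\int_0^1 G(tM)[M]\,dt$, so $F_k\to F_0$ locally uniformly with $F_0(M):=\int_0^1 G(tM)[M]\,dt$; by the classical theorem on differentiation under locally uniform convergence of derivatives, $F_0\in C^1$ with $DF_0=G$, and $F_0(0)=0$. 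Consequently $F_0$ satisfies \eqref{ellipticity}--\eqref{modulusofcontinuity} with the same ellipticity constants and the same modulus. Finally, for $\|M\|\le R$ one writes $\rho^{-1}F_k(\rho M)=\int_0^1 DF_k(t\rho M)[M]\,dt$ and estimates
\[
\|DF_k(t\rho M)-DF_0(0)\|\le \omega(\rho R)+\|DF_k(0)-DF_0(0)\|,
\]
which tends to $0$ uniformly in $t\in[0,1]$ and $\|M\|\le R$ along the subsequence as $\rho\to 0$; integrating gives $\rho^{-1}F_k(\rho M)\to DF_0(0)M$ locally uniformly, as claimed.

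I expect the main obstacle to be the middle step: guaranteeing that the limit operator $F_0$ is genuinely $C^1$ with a derivative still admitting the modulus $\omega$ --- equivalently, that the limit $G$ of the $DF_k$ really is the differential of $F_0$, and not merely some continuous selection of linear maps. This is precisely why the compactness is run on the derivatives and then transferred back through the fundamental theorem of calculus; once $DF_0=G$ is established, ellipticity and the modulus pass to $F_0$ for free, and the two simultaneous limits ($k\to\infty$ along the subsequence and $\rho\to0$) collapse to the single uniform estimate displayed above.
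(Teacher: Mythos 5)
The paper does not give its own proof of this proposition --- it is quoted directly from \cite[Proposition 2.2]{HY} (see also \cite{ASS, PT}) --- so there is no internal argument to compare against. Your proof is correct, and it is essentially the standard argument for such statements: running Arzel\`a--Ascoli on the derivatives $DF_k$ (uniform boundedness from \eqref{ellipticity}, equicontinuity with the common modulus from \eqref{modulusofcontinuity}) and then reconstructing $F_0$ through the fundamental theorem of calculus is precisely what guarantees that the limit $G$ is genuinely $DF_0$, so that \eqref{ellipticity}--\eqref{modulusofcontinuity} pass to $F_0$ verbatim. The final estimate
\[
\|DF_k(t\rho M)-DF_0(0)\|\le \omega(\rho R)+\|DF_k(0)-DF_0(0)\|
\]
correctly resolves the simultaneous limits $k\to\infty$ (along the extracted subsequence) and $\rho\to 0$, which the paper's terse statement leaves implicit. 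Two minor editorial remarks: the paper's display has $F_k(\delta M)$, which, in light of how $\tilde\delta_k$ is used in the ensuing proof, should be read as $F_k(\rho M)$ --- you have tacitly made that correction; and since $F_k:\operatorname{Sym}(n)\to\R$, the derivatives $DF_k$ land in $\mathcal{L}(\operatorname{Sym}(n),\R)$ rather than $\mathcal{L}(\operatorname{Sym}(n))$, a harmless notational slip that does not affect the argument.
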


The following result generalizes \Cref{main result 1}, and its proof is based on similar arguments. We sketch it here for the reader's convenience.

\begin{theorem}
    Let $F$ satisfy \eqref{ellipticity}-\eqref{modulusofcontinuity} and $F(0)=0$. If $u\in L^\infty(\R^n)$ is a viscosity solution of \eqref{equationfully}, for $\gamma\in(0,1/3)$ and $s>1-\frac{\gamma}{2}$, and $x_0\in B_{1/2}$ is a branching point of $u$, then there exists a constant $C>0$, depending only on $n$, $\gamma$, $\lambda$ and $\Lambda$, such that
    $$
    |u(x)|\le C\|u\|_{L^\infty(\R^n)}|x-x_0|^{\frac{2s}{1-\gamma}}, \quad \forall x \in B_{1/2}(x_0). 
    $$    
\end{theorem}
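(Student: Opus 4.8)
The plan is to rerun the blow-up--and--contradiction scheme of the proof of \Cref{main result 1}, replacing the two ingredients that are specific to the fractional Laplacian: the a priori regularity and the identification of the limiting equation. For the a priori step, $u\in C^{2s+\gamma}(B_{1/2})$ together with $\|u\|_{C^{2s+\gamma}(B_{1/2})}\le C\big(\|u\|_{L^\infty(\R^n)}+\|u\|_{L^\infty(B_1)}^{\gamma}\big)$ now follows from the fully nonlinear counterparts of \Cref{CNtheorem} and \Cref{Schauder}, which are available precisely because $F$ satisfies \eqref{ellipticity}--\eqref{modulusofcontinuity}: bounded viscosity solutions of $F(D^{2s}u)=f$ are $C^{1,\sigma}$ when $f\in L^\infty$ and $C^{2s+\gamma}_{\loc}$ when $f\in C^\gamma$. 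After normalizing $x_0=0$ and $\|u\|_{L^\infty(\R^n)}=1$, one argues by contradiction: if the claimed estimate fails, there are solutions $u_k$ with a branching point at the origin, $[u_k]_{C^{2s+\gamma}(B_{1/2})}\le 2C$, and $|u_k(x_k)|>k|x_k|^{\frac{2s}{1-\gamma}}$.

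From here I would reproduce verbatim the construction of $\theta_k(r^\prime)$ in \eqref{definitiontheta}, apply \Cref{caracterizacao de c1alpha} to obtain a sequence $r_k\to 0$ satisfying \eqref{contradicao2}, and set $v_k(x):=u_k(r_kx)\big/\big(\theta_k(r_k)r_k^{\frac{2s}{1-\gamma}}\big)$. The seminorm bound $[v_k]_{C^{2s+\gamma}(B_R)}\le R^{\frac{2s}{1-\gamma}-2s-\gamma}$ for $1\le R\le\tfrac{1}{2r_k}$, the interior bound on $D^2 v_k$ obtained by testing $D_{ee}v_k$ against a fixed cutoff and invoking that seminorm bound, and the ensuing local compactness all carry over unchanged. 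Thus, along a subsequence, $v_k\to v_0$ in $C^{2s+\gamma}_{\loc}$, with $0$ a branching point of $v_0$, $[v_0]_{C^{2s+\gamma}(B_R)}\le R^{\frac{2s}{1-\gamma}-2s-\gamma}$ for every $R\ge 1$, and -- crucially -- $[v_0]_{C^{2s+\gamma}(B_1)}\ge\tfrac12$.

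The one genuinely new step is the limiting equation. A scaling computation (using $D^{2s}[u_k(r_k\,\cdot)](x)=r_k^{2s}(D^{2s}u_k)(r_kx)$, valid since $D^{2s}$ is linear and scales like $(-\Delta)^s$ under dilations) shows that $v_k$ solves, in the viscosity sense, $G_k(D^{2s}v_k)=c_k\big(v_{k,+}^{\gamma}-v_{k,-}^{\gamma}\big)$ in $B_{1/(2r_k)}$, where $G_k(M):=\rho_k^{-1}F(\rho_kM)$, $\rho_k=\theta_k(r_k)r_k^{\frac{2s\gamma}{1-\gamma}}\le C r_k^{\gamma}\to 0$ and $c_k=\theta_k(r_k)^{\gamma-1}\to 0$. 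Each $G_k$ vanishes at the origin and satisfies \eqref{ellipticity}--\eqref{modulusofcontinuity} with the same ellipticity constants, so \Cref{convergence} applies and, up to a further subsequence, $G_k\to L:=DF_0(0)[\,\cdot\,]$ locally uniformly, $L$ being linear. Identifying $DF_0(0)$ with a matrix $A\in\operatorname{Sym}(n)$, which by \eqref{ellipticity} has spectrum in $[\lambda,\Lambda]$, we get $L[D^{2s}w](x)=\int_{\R^n}\delta w(x,y)\,\frac{\langle Ay,y\rangle}{|y|^{n+2s+2}}\,dy$; this is a translation-invariant, uniformly elliptic nonlocal operator of order $2s$, and hence is subject to the interior $C^{1,\sigma}$ estimate of \Cref{regularity}. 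Since $\gamma\in(0,1/3)$ forces $\frac{2s}{1-\gamma}<2s+2$, the bound $|v_k(x)|\le|x|^{2s+\gamma}$ on compact sets controls tails well enough to pass to the limit in the rescaled equation by nonlocal stability of viscosity solutions, yielding $L[D^{2s}v_0]=0$ in $\R^n$. By linearity and translation invariance, $L[D^{2s}(v_0(\cdot+h)-v_0(\cdot))]=0$ for every $h\in\R^n$; and since the proof of \Cref{SeminormGrowth} invokes only the interior $C^{1,\sigma}$ estimate, it applies verbatim with $L$ in place of $(-\Delta)^s$, taking $\nu=2$, $\beta:=\frac{2s}{1-\gamma}-2<1$ and $\alpha:=2s+\gamma-2<1$, and forces $v_0$ to be a polynomial of degree at most two. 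The branching-point condition then gives $v_0\equiv 0$, contradicting $[v_0]_{C^{2s+\gamma}(B_1)}\ge\tfrac12$.

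I expect the identification of the limiting equation to be the main obstacle. Unlike for $(-\Delta)^s$, the rescaled function $v_0(\cdot+h)-v_0(\cdot)$ does not a priori solve any equation, so one must first extract the linear, uniformly elliptic operator $L$ as the locally uniform limit of the rescaled nonlinearities $G_k$ -- which is precisely the role of \Cref{convergence}, resting in turn on the differentiability of $F$ at the origin built into \eqref{modulusofcontinuity} -- and then observe that the Liouville-type \Cref{SeminormGrowth}, although stated for the fractional Laplacian, is really a statement about any uniformly elliptic translation-invariant nonlocal operator of order $2s\in(1,2)$, since its proof relies only on \Cref{regularity}.
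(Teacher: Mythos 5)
Your overall architecture matches the paper's Section 6 argument: rerun the blow-up scheme, invoke \Cref{convergence} to extract a constant-coefficient linear operator $L=DF_0(0)[\,\cdot\,]$, and reduce to \Cref{SeminormGrowth} (either by an affine change of variables, as the paper does, or by noting that its proof uses only the interior $C^{1,\sigma}$ estimate, as you do -- a minor and legitimate variant). The a priori regularity step and the compactness argument you transplant from \Cref{main result 1} also carry over correctly. However, the limiting-equation step, which you yourself flag as the main obstacle, contains a genuine gap.

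You claim that passing to the limit in $G_k(D^{2s}v_k)=c_k\big(v_{k,+}^{\gamma}-v_{k,-}^{\gamma}\big)$ yields $L[D^{2s}v_0]=0$ in $\R^n$, and then derive the first-difference equation by linearity and translation invariance. But $v_0$ grows like $|x|^{\frac{2s}{1-\gamma}}$ at infinity (this is what the seminorm bound $[v_0]_{C^{2s+\gamma}(B_R)}\le R^{\frac{2s}{1-\gamma}-2s-\gamma}$ combined with the branching-point conditions actually gives; your bound $|v_k(x)|\le|x|^{2s+\gamma}$ drops the $R$-dependent prefactor and is too optimistic). Since $\frac{2s}{1-\gamma}>2s$, the integral defining $D^{2s}v_0$ is not absolutely convergent at infinity: the factor $\langle e_i,y\rangle\langle e_j,y\rangle\sim|y|^2$ cancels two powers of $|y|$ in the denominator $|y|^{n+2s+2}$, so the effective kernel decays like $|y|^{-n-2s}$ and the tail-integrability threshold is growth $o(|x|^{2s})$, not $o(|x|^{2s+2})$ as you assert. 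The statement $L[D^{2s}v_0]=0$ therefore does not make sense, and one cannot first establish it and then difference. The correct order of operations, and what the paper does, is to write the rescaled equation directly for the first difference: one has $|v_k(x+h)-v_k(x)|\le C_h|x|^{\frac{2s}{1-\gamma}-1}$ for $|x|\ge1$, and the exponent $\frac{2s}{1-\gamma}-1$ is strictly below $2s$ precisely because $\gamma<\frac{1}{1+2s}$, which is where $\gamma<1/3$ enters. The equation then takes the form $\rho_k^{-1}F_k\big(\rho_kD^{2s}(v_k(x+h)-v_k(x))+D^{2s}u_k(r_kx)\big)=\theta_k(r_k)^{\gamma-1}(\cdots)$ with $\rho_k=\theta_k(r_k)r_k^{\frac{2s\gamma}{1-\gamma}}\to0$; \Cref{convergence} identifies the limit of the left side, and the right side vanishes. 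This gives $L[D^{2s}(v_0(\cdot+h)-v_0(\cdot))]=0$ directly, without ever attempting to define $D^{2s}v_0$.
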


\begin{proof}
    Without loss of generality, we assume $x_0=0$ and $\|u\|_{L^\infty(\R^n)}=1$. We make use of \Cref{convergence} and argue as in the proof of \Cref{main result 1}. More precisely, if the conclusion fails, then there exist sequences $u_k$, $x_k$, and $F_k$ such that $0$ is a branching point of $u_k$,
    $$
    F_k( D^{2s}u_k)=(u_k)_+^\gamma-(u_k)_-^\gamma,
    $$
$$
[u_k]_{C^{2s+\gamma}(B_{1/2})}\le2C,
$$
but
\begin{equation*}
    |u_k(x_k)|>k|x_k|^{\frac{2s}{1-\gamma}}.
\end{equation*}
Let now $\theta_k$ be as in \eqref{definitiontheta}. As observed in the proof of \Cref{main result 1}, there exists $r_k\to0$, such that \eqref{contradicao2} holds. A direct calculation shows that
\begin{equation}\label{fullynonlinearequation}
    \begin{split}
        &\tilde{\delta}_k^{-1}F_k(\tilde{\delta}_k D^{2s}(v_k(x+h)-v_k(x))+D^{2s}u_k(r_kx))\\
        &=\frac{1}{\theta_k^{1-\gamma}(r_k)}\left[\left(v_k(x+h)\right)_+^\gamma-\left(v_k(x+h)\right)_-^\gamma\right],
    \end{split}
\end{equation}
where $v_k$ is defined by \eqref{definitionvk}, and $\tilde{\delta}_k:=\theta_k(r_k)r_k^{\frac{\gamma}{1-\gamma}}$. Furthermore, as noted in the proof of \Cref{main result 1}, for any $R\ge1$, $v_k$ converges, up to a subsequence, to some $v_0$ in $C^{2s+\gamma}_{\loc}(B_R)$, as $k\to\infty$, and $v_0$ satisfies \eqref{limitseminormestimate2}. Since
$$
\theta_k(r_k)r_k^{\frac{\gamma}{1-\gamma}} \longrightarrow 0,
$$
as $k\to\infty$, using \Cref{convergence}, and passing to the limit in \eqref{fullynonlinearequation}, we get
\begin{equation}\label{limitequation}
    DF_0(0)D^{2s}v_0=0.
\end{equation}
On the other hand, since $DF_0(0)D^{2s}$ is a constant coefficient linear operator, then up to an affine change of variables, from \eqref{limitequation}, we conclude
$$
(-\Delta)^sv_0=0\quad \textrm{in} \ \R^n.
$$
Therefore, as in the proof of \Cref{main result 1}, \Cref{SeminormGrowth} implies $v_0\equiv0$, which contradicts \eqref{limitseminormestimate2}.
\end{proof}

\begin{remark}\label{another remark on one-phase problem}
By passing to the limit as $s\nearrow 1$, we extend the regularity result of \cite{T16} to the two-phase setting, also covering the case of fully nonlinear elliptic operators.
\end{remark}

\bigskip

\noindent {\small \textbf{Acknowledgments.} DP is partially supported by CNPq (grant 305680/2022-6) and CNPq/MCTI 10/2023 (grant 420014/2023-3). RT is supported by the King Abdullah University of Science and Technology (KAUST). JMU is partially supported by the King Abdullah University of Science and Technology (KAUST) and UID/00324 - Centre for Mathematics of the University of Coimbra. We thank the anonymous referee for the valuable suggestions that helped improve the manuscript.}

\smallskip

\end{document}